\numberwithin{equation}{section}
\newtheorem{theorem}{Theorem}[section]
\newtheorem{lemma}[theorem]{Lemma}
\newtheorem{assumption}{Assumption}[section]
\newtheorem{problem}{Problem}
\def\d{\mathrm d}
\def\N{\mathbb N}
\def\C{\mathbb C}
\def\al{\alpha}
\def\te{\theta}
\def\na{\nabla}
\def\Om{\Omega}
\def\pa{\partial}
\def\wh{\widehat}
\def\LL{{L^2(\Om)}}
\def\Om{\Omega}
\def\al{\alpha}
\def\te{\theta}
\def\vp{\varphi}
\def\nb{\nabla}
\def\pa{\partial}
\def\wh{\widehat}
\title{Uniqueness and stability for inverse source problem for fractional diffusion-wave equations }
\author[1]{Xing Cheng\thanks{chengx@hhu.edu.cn}}
\author[2]{Zhiyuan Li\thanks{Corresponding author: zyli@sdut.edu.cn}}
\affil[1]{\normalsize{College of Science, Hohai University, Nanjing 210098, Jiangsu, China}}
\affil[2]{\normalsize{School of Mathematics and Statistics, Shandong University of Technology, Zibo 255049, Shandong, China}}
\date{}
\begin{document}
\maketitle

\begin{abstract}
This paper is devoted to the inverse problem of determining the spatially dependent source in a time fractional diffusion-wave equation, with the aid of extra measurement data at subboundary. Uniqueness result is obtained by using the analyticity and the new established unique continuation principle provided that the coefficients are all temporally independent. We also derive a Lipschitz stability of our inverse source problem under a suitable topology whose norm is given via the adjoint system of the fractional diffusion-wave equation.

\medskip

\noindent{\it Keywords:} Fractional diffusion-wave equation; Inverse source problem; Uniqueness; Stability

\medskip

\noindent {\it MSC 2010} Primary: 35R30; Secondary: 35R11, 26A33.
\end{abstract}

\section{Introduction and main results}\label{sec-intro}
We consider the following time-fractional diffusion-wave equation
\begin{align}\label{eq-gov}
\begin{cases}
\partial_t^\alpha u(x,t) +A(x)u(x,t) = F(x,t) &\text{ in } \Omega \times (0, T), \\
u(x,0 ) =a(x),\quad u_t(x,0 ) = b(x)& \text{ in } \Omega,\\
u(x,t )=0 & \text{ on }\partial \Omega \times (0, T),
\end{cases}
\end{align}
where $\Omega$ is a subset of $\mathbb R^d$ with a sufficiently smooth boundary $\partial\Omega$ and $(0,T)$ is a time interval.
$\partial_t^\alpha$ is the Caputo derivative of order $\alpha \in (1,2)$ defined by
\begin{align*}
\partial_t^\alpha g (t) = \frac1{ \Gamma ( 2 - \alpha)} \int_0^t (t - \tau)^{1 - \alpha } g''(\tau) \,\mathrm{d}\tau,  \quad t > 0,
\end{align*}
for $g\in AC^1[0,T]:= \left\{g,g'\in AC[0,T] \right\}$. $A(x)$ is an elliptic operator which is defined by
$$
A(x) u =  -\sum_{i,j=1}^d \partial_{x_j} \left(a_{ij}(x) \partial_{x_j}u \right) + B(x)\cdot \nabla u + c(x) u, \quad \forall \, u\in H_0^1(\Omega) \cap H^2(\Omega),
$$
where $a_{ij}\in C^1 \left(\overline\Omega \right)$ with $a_{ij}=a_{ji}$, and there exists a positive constant $a_0$ such that
\begin{equation}
\label{condi-elliptic}
\sum_{i,j=1}^d a_{ij}(x) \xi_i \xi_j \ge a_0 \sum_{j=1}^d |\xi_j|^2,\quad \forall\,  x\in\overline\Omega,\ \forall\,  (\xi_1,\cdots,\xi_d)\in \mathbb R^d.
\end{equation}
The governing equation in \eqref{eq-gov} is a time-fractional diffusion-wave equation of super-diffusion type.
It has gained more and more attentions from many scientists which is due to many reasons, for example, fractional operators take into account, not only the present and local states of phenomenon, but also historical ones, see e.g.,
\cite{Butzer, HLW, Herrmann, Main96, Main10, Metzler, Pov08, Pov15, SW89} for
various applications of fractional diffusion-wave equations in aerodynamics, biology, physics, chemistry, quantum mechanics, nuclear physics,
hadron spectroscopy and so on. Recently, there is also a lot of literatures concerning
\eqref{eq-gov} from both theoretical and numerical aspects, we refer to
\cite{Agrawal02, Go00, JiangLiu, Kem12, Ko14, Lu18, Pon20, Pov12, SY11} and the references therein.

In this paper, we will focus on the inverse problem of \eqref{eq-gov} in the following formulation:
\begin{problem}[Inverse source problem]\label{st1v2}
 Let $F(x,t)=f(x) g(t)$ with $g\in C^1[0,T]$ and $f\in H_0^1(\Omega)$ in \eqref{eq-gov}.
 Let $\Gamma$ be a nonempty and open subset of the boundary $\partial\Omega$. The following problems arise naturally.
\begin{enumerate}[1)]
\item (Uniqueness)
 Given $\phi(x,t) = \partial_{\nu_A}u(x, t):=\sum\limits_{i,j=1}^d a_{ij}(x) \partial_{x_i}u(x,t) \nu_j$, $(x,t)\in\Gamma\times(0,T)$, where $\nu:=(\nu_1,\cdots,\nu_d)$ denotes the outward unit normal vector on the boundary $\partial\Omega$, as boundary measurement for our inverse problem. Does the boundary data $\phi(x,t)$ uniquely determine the source term $f$?
\item (Stability)
Since the measurement is noise-contaminated inevitability, how to establish a stability estimate for the correspondence  $\phi \mapsto f$?
\end{enumerate}
\end{problem}
Inverse problem is one of the most important fields in the applied mathematics, which consists of recoverying, e.g., the source term, initial state of the given system by means of additional measurements based on forward problems.
To the authors' best knowledge, the study on inverse problems for fractional differential equations is focused on $0 < \alpha < 1$
(see, e.g. Jiang, Li, Liu and Yamamoto \cite{JLLY17}), Yamamoto and Zhang \cite{YZ12}, Zhang and Xu \cite{ZX11}. For $1 < \alpha < 2$, there are few theoretical results on the inverse source problems except for Hu, Liu and Yamamoto \cite{HLY18, LHY20}, Liao and Wei \cite{LiaoWei} and Yan and Wei \cite{YanWei20}.
In \cite{LiaoWei} the fractional order and the source term in a time-fractional diffusion-wave equation were simultaneously identified. \cite{YanWei20} is devoted to identify a space-dependent source term in a symmetric time fractional diffusion-wave equation from a part of noisy boundary data and the uniqueness result which is established via Titchmarsh convolution theorem and the Duhamel principle.
 \cite{HLY18, LHY20} are concerned with the determination of moving source profile functions arising from time-fractional diffusion-wave equations, where the unique identification of two source profiles with distinct moving directions was verified by the unique continuation principle.

For identifying other parameters, such as fractional orders, initial values, coefficients, we can refer to Floridia and Yamamoto \cite{FY20}, Yan and Wei \cite{YanWei20}, Yan, Zhang and Wei \cite{YanZhangWei}. We also refer to Jin and Rundell \cite{RunJin15}, Li, Liu and Yamamoto \cite{LiLY19}, Li and Yamamoto \cite{LY19} and Liu, Li and Yamamoto \cite{LiuLY19} for a topical review and a comprehensive list of bibliographies. For numerical treatments on inverse problems for fractional diffusion-wave equations, we refer to Wei and Zhang \cite{WeiZhang}, Xian and Wei \cite{XianWei} and the references therein.

By now,  almost all existing literatures treating the related inverse problems heavily rely on the symmetry of the fractional differential equations.  As for inverse source problem for nonsymmetric fractional diffusion-wave equations they are still at an early stage. Thus, in this paper, we will deal with an inverse source problem for nonsymmetric diffusion-wave equation 

{The inverse problem in Problem \ref{st1v2}} resolves the dilemma that many important quantities cannot be observed directly,
while it costs less to observe the considered system in a subboundary $\Gamma$ than to observe it in the whole domain $\Omega$. We will now give the answer to the Problem \ref{st1v2}. Before presenting our results, we introduce the Sobolev space $H_\alpha(0,T)$ of order $\alpha\in(1,2)$:
\begin{align}\label{eq2.2v2}
H_\alpha(0,T):= \left\{g\in H^1(0,T); \frac{d}{dt} g\in H_{\alpha-1}(0,T) \right\}
\end{align}
with the norm
$$
\|g\|_{H_\alpha(0,T)}:= \|g\|_{L^2(0,T)} + \left\|\frac{d}{dt} g \right\|_{H_{\alpha-1}(0,T)}.
$$
This kind of Sobolev space is a subspace of the usual fractional Sobolev space $H^\alpha(0,T)$, we refer to \cite{GLY15}.
\begin{theorem}\label{thm-unique}
Assume $F(x,t)=f(x)g(t)$ with $g(\cdot)$ known as a $C^1[0, T]$-function such that $g(0) \ne 0$, and let $u\in H_\alpha\left(0,T;L^2(\Omega) \right) \cap L^2 \left(0,T; H_0^1(\Omega) \cap H^2(\Omega) \right)$ be the solution to the problem \eqref{eq-gov} with $f\in H_0^1(\Omega)$. Then $\partial_{\nu_A} u(x,t) = 0$, $(x,t)\in\Gamma \times (0, T)$ implies $f = 0$ in $\Omega$.
\end{theorem}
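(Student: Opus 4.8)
The plan is to transform the time-dependent inverse source problem into a one-parameter family of elliptic Cauchy problems via the Laplace transform, then to exhaust the spectrum of $A$ by a residue argument and annihilate each spectral component with a unique continuation principle. First, since \eqref{eq-gov} is linear in the data $(a,b,f)$, the uniqueness question reduces to the case $a=b=0$: comparing two solutions driven by $f_1g$ and $f_2g$ with the same initial data, their difference solves \eqref{eq-gov} with source $(f_1-f_2)g$, vanishing initial data, and vanishing flux on $\Gamma\times(0,T)$, so it suffices to show that this forces $f_1-f_2=0$. Under $a=b=0$ I would apply the Laplace transform $t\mapsto s$ to \eqref{eq-gov}; since $\mathcal L[\partial_t^\alpha u](s)=s^\alpha\wh u(s)$ when $u(0)=u_t(0)=0$, the transformed system is $(s^\alpha+A)\wh u(\cdot,s)=\wh g(s)\,f$ in $\Omega$ with $\wh u(\cdot,s)=0$ on $\partial\Omega$, i.e. $\wh u(\cdot,s)=\wh g(s)\,(s^\alpha+A)^{-1}f$. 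The hypothesis $g(0)\neq0$ gives $\wh g(s)=g(0)/s+o(1/s)\neq0$ for all large real $s$, so dividing by $\wh g(s)$ and using the vanishing of $\partial_{\nu_A}u$ on $\Gamma\times(0,T)$ (propagated in $t$ through the analyticity of the fractional solution operator — the point where the finite observation window and the mere $C^1$ regularity of $g$ must be handled carefully) yields
\begin{align}\label{eq-plan-res}
\partial_{\nu_A}\big[(s^\alpha+A)^{-1}f\big]=0\quad\text{on }\Gamma,\qquad \mathrm{Re}\,s\gg1.
\end{align}

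Writing $\eta=s^\alpha$ and $v_\eta=(\eta+A)^{-1}f$, I would exploit that $\eta\mapsto(\eta+A)^{-1}$ is meromorphic on $\mathbb{C}$ with poles exactly at the points $-\lambda_k$, where $\{\lambda_k\}$ are the eigenvalues of $A$ under the Dirichlet condition, and whose residue at $-\lambda_k$ is the Riesz projection $P_k$ onto the generalized eigenspace. Since \eqref{eq-plan-res} holds for $\eta$ ranging over a real ray with an accumulation point, and $\eta\mapsto\partial_{\nu_A}v_\eta$ is holomorphic into $H^{1/2}(\partial\Omega)$ on the resolvent set, analytic continuation in $\eta$ propagates $\partial_{\nu_A}v_\eta=0$ on $\Gamma$ to the whole connected resolvent set. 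Here the non-self-adjointness of $A$ (the drift $B\cdot\nabla$ breaks symmetry) is essential: the $P_k$ are oblique, not orthogonal, projections, and one must read the consequence of \eqref{eq-plan-res} along entire Jordan chains, not merely on genuine eigenfunctions.

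Expanding $v_\eta$ into its Laurent series at the pole $\eta=-\lambda_k$, whose principal part is built from the nilpotent operators $(A-\lambda_k)^jP_k$ for $0\le j\le m_k-1$ ($m_k$ the algebraic multiplicity), and matching coefficients in the continued identity $\partial_{\nu_A}v_\eta=0$ on $\Gamma$, refines the conclusion to
\begin{align}\label{eq-plan-chain}
\partial_{\nu_A}\big[(A-\lambda_k)^jP_kf\big]=0\quad\text{on }\Gamma,\qquad 0\le j\le m_k-1.
\end{align}
The top link $w_{m_k-1}:=(A-\lambda_k)^{m_k-1}P_kf$ is a genuine eigenfunction in $H^2(\Om)\cap H_0^1(\Om)$ solving $(A-\lambda_k)w_{m_k-1}=0$; since $w_{m_k-1}=0$ on $\partial\Om\supset\Gamma$ and, by \eqref{eq-plan-chain}, $\partial_{\nu_A}w_{m_k-1}=0$ on $\Gamma$, it has vanishing Cauchy data on $\Gamma$, so the unique continuation principle forces $w_{m_k-1}\equiv0$. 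Descending the chain by downward induction on $j$ — each $(A-\lambda_k)^jP_kf$ becoming a genuine eigenfunction once the link above is known to vanish, again carrying zero Cauchy data on $\Gamma$ by \eqref{eq-plan-chain} — gives $P_kf=0$ for every $k$. Finally, since the root functions of the second-order elliptic operator $A$ form a complete system in $L^2(\Om)$, the vanishing of all projections yields $f=0$.

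I expect the main obstacle to be precisely the loss of self-adjointness, which is the stated novelty of the paper. In the symmetric case an orthonormal eigenbasis makes \eqref{eq-plan-chain} isolate single eigenfunctions immediately; here one must instead control oblique Riesz projections, run the unique continuation argument along Jordan chains, and — most delicately — rely both on the completeness of the generalized eigenfunctions of a nonself-adjoint elliptic operator and on a unique continuation principle valid for the possibly complex shifted operators $A-\lambda_k$ with Cauchy data prescribed only on the subboundary $\Gamma$; establishing such a principle is presumably the content of the ``new unique continuation principle'' advertised in the abstract. A secondary but genuine difficulty is the rigorous justification of \eqref{eq-plan-res} from data given only on the finite window $(0,T)$ with $g$ merely $C^1$, which is where the analyticity properties of the solution operator for \eqref{eq-gov} and the condition $g(0)\neq0$ must be combined.
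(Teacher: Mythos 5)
There is a genuine gap at your very first analytic step: the passage from data on $\Gamma\times(0,T)$ to the resolvent identity $\partial_{\nu_A}\bigl[(s^\alpha+A)^{-1}f\bigr]=0$ on $\Gamma$. The function $g$ is given only on $[0,T]$ and is merely $C^1$, so $\widehat g(s)$ is not defined; more seriously, the solution $u$ of the \emph{inhomogeneous} problem with source $f(x)g(t)$ is not analytic in $t$ (the paper's analyticity lemma, Lemma \ref{lem-analy}, holds only for $F=0$, and a $C^1$ source destroys time-analyticity), so the vanishing of $\partial_{\nu_A}u$ on $(0,T)$ cannot be ``propagated through analyticity'' to $(0,\infty)$: any extension of $g$ beyond $T$ produces a solution whose flux on $\Gamma\times(T,\infty)$ you do not control. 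You flag this as something to ``handle carefully,'' but the handling is the essential content of the proof, not a technicality. The paper's mechanism is the Duhamel principle (Lemma \ref{lem-duha}): with $a=b=0$ one writes $u(t)=\int_0^t\rho(t-\tau)v(\tau)\,d\tau$ with $J^{2-\alpha}\rho=g$ and $v$ solving the \emph{homogeneous} problem with $v(\cdot,0)=0$, $v_t(\cdot,0)=f$. The data give $\rho*\partial_{\nu_A}v=0$ on $\Gamma\times(0,T)$; applying $J^{2-\alpha}$ and differentiating yields the second-kind Volterra equation $g(0)\,\partial_{\nu_A}v(t)+\int_0^tg'(t-\tau)\,\partial_{\nu_A}v(\tau)\,d\tau=0$, and $g(0)\neq0$ together with Gronwall forces $\partial_{\nu_A}v=0$ on $\Gamma\times(0,T)$. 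Only at that point is one dealing with a time-analytic quantity, so the vanishing extends to $(0,\infty)$ and the Laplace transform becomes legitimate. Note also that this is where $g(0)\neq 0$ actually does its work; in your plan its only role is to keep $\widehat g(s)\neq0$ for large $s$, which is not where the hypothesis is needed.

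The remainder of your plan, applied to $v$ rather than to $u$, is a legitimate alternative to the paper's argument: meromorphic continuation of $\eta\mapsto(\eta+A)^{-1}f$, vanishing of the Laurent coefficients at each pole, descent along Jordan chains using elliptic unique continuation from Cauchy data on the subboundary, and completeness of the root functions. It would work, but it leans on the completeness of the generalized eigenfunctions of the non-self-adjoint operator $A$ (an Agmon/Keldysh-type theorem that must be cited) and on handling oblique Riesz projections. The paper's Lemma \ref{lem-ucp} sidesteps all of this: it identifies $s^{2-\alpha}\widehat v(\cdot;s)$ with the Laplace transform, evaluated at $\eta=s^\alpha$, of the solution $u_2$ of the parabolic problem $\partial_tu_2+Au_2=0$ with initial datum $f$, then invokes uniqueness of the inverse Laplace transform and the classical unique continuation property for parabolic equations to get $u_2\equiv0$, hence $f=0$, with no spectral decomposition at all. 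If you repair the first step via Duhamel and deconvolution, either route to the conclusion is acceptable, but as written the proposal does not close the loop between the finite observation window and the resolvent identity on which everything else depends.
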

We can further derive a Lipschitz type stability of our inverse source problem under a suitable topology.
\begin{theorem}\label{thm-stabi}
Under the same assumptions in Theorem \ref{thm-unique}, furthermore suppose $(u_j,f_j)$, $j=1,2$ are two pairs of solutions of our inverse source problem
which correspond to the measurement data $\partial_{\nu_A} u_j(x,t)$, $j=1,2$. Then we have
$$
\|f_1-f_2\|_{\mathcal B} \le C \left\|\partial_{\nu_A} u_1 - \partial_{\nu_A} u_2 \right\|_{L^2(\Gamma\times(0,T))},
$$
where the norm $\mathcal{B}$ is the norm associated with the bilinear form defined in \eqref{eq2.3v3}.
\end{theorem}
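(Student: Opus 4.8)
The plan is to combine the linearity of the direct problem with a transposition (duality) argument based on the adjoint fractional diffusion-wave system, letting the uniqueness Theorem~\ref{thm-unique} supply the positivity that promotes the bilinear form of \eqref{eq2.3v3} to a genuine norm. First I would reduce the estimate to a single solution. Since $u_1$ and $u_2$ are generated by the same (known) initial data $a,b$, the same boundary condition and the same temporal factor $g$, the difference $u:=u_1-u_2$ solves \eqref{eq-gov} with source $f(x)g(t)$, where $f:=f_1-f_2$, homogeneous Cauchy data $u(\cdot,0)=u_t(\cdot,0)=0$ and homogeneous Dirichlet condition, while the measurement becomes $\partial_{\nu_A}u=\partial_{\nu_A}u_1-\partial_{\nu_A}u_2$ on $\Gamma\times(0,T)$. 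It therefore suffices to prove $\|f\|_{\mathcal B}\le C\|\partial_{\nu_A}u\|_{L^2(\Gamma\times(0,T))}$ for one pair $(u,f)$.

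Next I would set up the adjoint system. For a test datum $\mu$ on $\Gamma\times(0,T)$, let $v=v_\mu$ solve the backward-in-time problem $D_{T-}^{\alpha}v+A^*v=0$ in $\Omega\times(0,T)$, with terminal conditions chosen so that all boundary-in-time contributions cancel, Dirichlet data $v=\mu$ on $\Gamma$ and $v=0$ on $(\partial\Omega\setminus\Gamma)\times(0,T)$; here $A^*$ and its co-normal $\partial_{\nu_{A^*}}$ are the formal adjoints of $A$ and $\partial_{\nu_A}$, and the nonsymmetry of $A$ (the convection term $B\cdot\nabla$) forces $A^*\neq A$ and produces extra boundary terms that must be tracked. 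Multiplying the equation for $u$ by $v$, integrating over $\Omega\times(0,T)$, and combining Green's formula in space with fractional integration by parts in time, the homogeneous Cauchy data of $u$, the homogeneous Dirichlet data of $u$ on all of $\partial\Omega$, and the vanishing of $v$ off $\Gamma$ collapse everything to the identity
\[
\int_0^T\!\!\int_\Gamma (\partial_{\nu_A}u)\,\mu \,\mathrm{d}S\,\mathrm{d}t
=\int_\Omega f(x)\Bigl(\int_0^T g(t)\,v_\mu(x,t)\,\mathrm{d}t\Bigr)\mathrm{d}x .
\]
This representation is exactly what underlies the bilinear form of \eqref{eq2.3v3}: it expresses the action of $f$ against the adjoint state purely in terms of the boundary data, so that $\|f\|_{\mathcal B}$ coincides (up to the normalisation in \eqref{eq2.3v3}) with the $L^2(\Gamma\times(0,T))$ norm of the observed co-normal trace, whence the stated Lipschitz bound. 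The fact that $\|f\|_{\mathcal B}=0$ forces $f=0$, i.e. that $\mathcal B$ is positive definite and genuinely defines a norm, is precisely the content of Theorem~\ref{thm-unique}.

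The main obstacle I anticipate is the time-transposition itself: establishing the fractional integration-by-parts formula with the correct right-sided Riemann--Liouville/Caputo derivative and matching the terminal conditions on $v_\mu$ so that every term at $t=0$ and $t=T$ cancels, together with the well-posedness and trace regularity of the adjoint problem needed to render $\partial_{\nu_A}u\in L^2(\Gamma\times(0,T))$ and to make the pairing above meaningful. The nonsymmetry adds bookkeeping through $A^*$ and the boundary contributions of the convection term, but the conceptual difficulty is concentrated in the fractional-in-time duality for $\alpha\in(1,2)$ and in the regularity of the adjoint state $v_\mu$; once these are secured, the reduction and the identity above deliver the estimate with a constant depending only on $\Omega$, $A$, $g$ and $T$.
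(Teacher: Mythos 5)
Your proposal is correct and follows essentially the same route as the paper: the transposition identity against the adjoint state $v[\varphi]$ is exactly Lemma \ref{le4.2v3}, the observation that Theorem \ref{thm-unique} makes $\|\cdot\|_{\mathcal B}$ positive definite is Lemma \ref{le4.3v3}, and the final bound is the same Cauchy--Schwarz estimate followed by the supremum in \eqref{eq2.3v3}. The only cosmetic difference is that you assert $\|f\|_{\mathcal B}$ \emph{equals} the $L^2(\Gamma\times(0,T))$ norm of the co-normal trace, whereas only the inequality is needed and claimed.
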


We list the main difficulties and the innovations of this paper. The essential difficulties are
\begin{enumerate}
\item the nonsymmetic term makes the usual analytical technique used in Sakamoto and Yamamoto \cite{SY11}, like Mittag-Leffler expression and Laplace transform argument, cannot work directly; Indeed, it is difficult to give an exact formula for the solution of our inverse source problem.
\item the classical Caputo derivative $\partial_t^\alpha g$ needs $g\in AC^1[0,T]$, and this space is too narrow to find one solution for several important applications. Moreover, the fractional derivative is nonlocal, which makes that there are quite few mathematical tools to derive the stability result.
\end{enumerate}
The main innovations of this paper include
\begin{enumerate}
\item we understand the Caputo derivative in the operator sense, and prove the wellposedness for the forward problem by regarding the non-symmetric term as a new source in the framework of fractional Sobolev spaces;
\item we obtain the uniqueness for the inverse source problem on the basis of a new established unique continuation for the fractional diffusion-wave equation in Lemma \ref{lem-ucp};
\item we equip $L^2(\Omega)$ with a new norm which is constructed by the adjoint system of the fractional diffusion-wave, from which the continuous dependence of solution of problem \ref{st1v2}  with respect to the additional measurement data is
also verified.
\end{enumerate}

\section{Preliminaries}\label{sec-pre}
This section is devoted to some basic definitions and properties,
covering both the fractional calculus and adjoint system, which will be used throughout this paper.

\subsection{Fractional derivatives}
We start by giving some definitions of fractional derivatives and basic function spaces. The Riemann-Liouville fractional integral $J^\gamma$ of order $\gamma>0$ is defined to be
\begin{equation}
\label{defi-RL}
J^\gamma g(t) = \frac1{\Gamma(\gamma)} \int_0^t (t-\tau)^{\gamma-1} g(\tau) d\tau,\quad t>0,
\end{equation}
where $\Gamma(\cdot)$ is the Gamma function. In the case of $\gamma\in(0,1)$, 
it is known that the Riemann-Liouville integral operator $J^{\gamma}: L^2(0,T) \to H_\gamma(0,T)=J^\gamma \left(L^2(0,T) \right)$ is bijective, see e.g., Gorenflo, Luchko and Yamamoto \cite{GLY15}.
Following the treatment in \cite{GLY15,KRY20}, we know $H_\alpha(0,T)$ in \eqref{eq2.2v2} coincides with the space $J^\alpha \left(L^2(0,T) \right)$ for $\alpha\in(1,2)$,
then we extend the above pointwisely defined Caputo derivative of order $\alpha\in(1,2)$ into the operator sense.

Now we define the Caputo derivative $\partial_t^\alpha $ of order $\alpha\in (1,2)$ to be
$$
\partial_t^\alpha g = \frac{d^2}{dt^2} J^{2-\alpha} g = \frac{d}{d t}J^{2-\alpha} \frac{d}{dt}g, \quad g \in H_\alpha(0,T).
$$
The second equality can be done easily by the definition of the space $H_\alpha(0,T)$. Parallelly, we define the backward Riemann-Liouville integral operator $J^\alpha_{T-}$ by
$$
J^\alpha_{T-}g(t) =  \frac1{\Gamma(\alpha)}\int_t^T \frac{g(\tau)}{(\tau-t)^{1-\alpha}} d\tau, \quad 0<t<T,
$$
and the backward Caputo derivative by $\partial_{T-}^\alpha g(t):= -\frac{d}{d t}J^{2-\alpha}_{T-} \frac{d}{dt}g(t)$, $g\in J_{T-}^\alpha \left(L^2(0,T) \right)$.

Let $L^2(\Omega)$ be the usual $L^2$-space with the inner product $\langle \cdot,\cdot \rangle_{L^2(\Omega)}$ ($\langle\cdot,\cdot\rangle$ for short), and $H^\ell(\Omega)$, $H^k_0(\Omega)$ denote the Sobolev spaces. We denote
\begin{equation}\label{def-A0}
A_0  u(x):= -\sum_{i,j=1}^d \partial_{x_i} \left(a_{ij} \partial_{x_j}u(x) \right), \quad \forall \, u\in H_0^1(\Omega) \cap H^2(\Omega).
\end{equation}
 Then the operator $A_0$ is symmetric uniformly elliptic.
 Let $\{\lambda_n,\varphi_n\}_{n=1}^\infty$ be the Dirichlet eigensystem of $A_0$, where $0<\lambda_1\le \lambda_2\le \cdots $ and the corresponding eigenfunctions $\{\varphi_n\}_{n=1}^\infty$ forms an orthonormal basis of $L^2(\Omega)$. We can define the fractional power $A_0^\gamma$ as follows,
$$
A_0^\gamma u := \sum_{n=1}^\infty \lambda_n^\gamma \langle u,\varphi_n \rangle_{L^2(\Omega)} \varphi_n,\quad u\in D(A^\gamma_0),
$$
where $D(A_0^\gamma)$ is a Hilbert space
$$
D(A_0^\gamma) := \left\{\psi\in L^2(\Omega);  \sum_{n=1}^\infty \lambda_n^{2\gamma} \left|\langle u,\varphi_n \rangle_{L^2(\Omega)} \right|^2 < \infty \right\}
$$
with the norm
$$
\|\psi\|_{D(A_0^\gamma)} := \left[ \sum_{n=1}^\infty \lambda_n^{2\gamma}  \left| \left\langle u,\varphi_n  \right\rangle_{L^2(\Omega)} \right|^2 \right]^{\frac12}.
$$
 We have $D(A_0^\gamma) \subset H^{2\gamma}(\Omega)$ ($\gamma>0$). More specially, we have $D(A_0)=H^2(\Omega) \cap H_0^1(\Omega)$ and $D \left(A_0^{\frac12} \right)=H_0^1(\Omega)$.

\subsection{Adjoint system and bilinear form}
In this subsection, we first define the corresponding adjoint system to the problem \eqref{eq-gov}. We define the following backward problem
\begin{align}\label{eq4.1v3}
\begin{cases}
\partial_T^\alpha v - A_0(x) v + \nabla \cdot(B(x) v) - c(x)v = 0 &\text{ in } \Omega \times (0, T), \\
J_T^{1- \alpha} v(\cdot,T) = J_T^{1- \alpha} v_t(\cdot,T) = 0 &\text{ in } \Omega,\\
v(x,t) = \varphi(x,t) &\text{ on } \partial \Omega \times (0, T),
\end{cases}
\end{align}
where $\varphi \in C^\infty(\partial\Omega\times[0,T])$ s.t. supp\,$\varphi \subseteq \Gamma$, $\Gamma$ is the observation boundary which is a nonempty and open subset of $\partial \Omega$. The above problem is called the adjoint system to the original problem \eqref{eq-gov}.

We next introduce a bilinear form $\mathcal{B}_g(f, \varphi): L^2(\Omega)\times C^\infty(\partial\Omega\times[0,T]) \to \mathbb{R}$ in terms of $g$ by
\begin{equation}\label{def-B}
\mathcal{B}_g (f, \varphi) : = \int_0^T \langle f g, v[\varphi] \rangle_{L^2(\Omega)} \,\mathrm{d}t,
\end{equation}
where $v[\varphi]$ solves the problem \eqref{eq4.1v3} with $\varphi \in C^\infty(\partial\Omega\times[0,T])$ and $\text{supp} \varphi \subseteq \Gamma$.  By the above defined bilinear form, we will equip $L^2(\Omega)$ with a new suitable topology via the following functional $\| \cdot \|_{\mathcal B}: L^2(\Omega) \to \mathbb{R}^+$,
\begin{align}\label{eq2.3v3}
\| f \|_{\mathcal B}: = \sup\limits_{  \left\| \varphi  \right\|_{L^2(\Omega) }= 1}  \left|\mathcal{B}_g (f, \varphi) \right|,\quad f\in L^2(\Omega) .
\end{align}
In Section \ref{sec-stabi} we will prove the above defined functional is actually a norm of $L^2(\Omega)$.

The remaining part of this article is organized as follows. In section \ref{sec-pre},
we will give some basic fractional calculus in this paper. Section \ref{sec-fp}
is devoted to the wellposedness of the forward problem. The $t$-analyticity of
the solution to the problem \eqref{eq-gov} with $F=0$ will be discussed in Section \ref{sec-analy}.
In section \ref{sec-isp}, the uniqueness of the inverse problem will be proved by the new established unique continuation principle.
In Section \ref{sec-stabi}, on the basis of the uniqueness result, by equipping a suitable norm via the adjoint system to the problem \eqref{eq-gov},
we prove the Lipschitz continuous dependency of the solution of the inverse source problem with respect to the observation data on the partial boundary.

\section{Wellposedness for forward problem}\label{sec-fp}
In this section, we will give the definition of the mild solutions to \eqref{eq-gov} and consider the unique existence and the regularity estimate for the mild solution in the case where the coefficients, initial values and source term satisfy the following conditions:
\begin{assumption}
\label{ass-coef}
\hfill
\begin{itemize}
\item Coefficients such that $B,c\in W^{1,\infty}(\Omega)$;
\item The initial values $a\in D(A_0)$, $b\in H_0^1(\Omega)$ and the source $F\in L^2 \left(0,T;H_0^1(\Omega) \right)$.
\end{itemize}
\end{assumption}
Due to the technical difference, we first treat the non-advection case $B = 0$ and $c=0$ of \eqref{eq-gov} in Subsection \ref{ssec-symmetric}, and then proceed to the general situation in Subsection \ref{ssec-nonsym}. To this end, we will show several useful lemmata which are related to the fractional integral and derivative introduced in the above section and they will be used in the forthcoming discussion. The first one is about the Mittag-Leffler functions:
$$
E_{\alpha,\beta}(z) := \sum_{k=0}^\infty \frac{z^k}{\Gamma(\alpha k +\beta)},\quad z\in\mathbb C,
$$
where $\alpha>0$ and $\beta\in\mathbb R$. It is known that  $E_{\alpha,\beta}(z)$ is an entire function in $z\in\mathbb C$. We first state the following lemma.
\begin{lemma}\label{lem-ml}\cite[Theorem 1.4 on p.33]{P99}
Let $\alpha>0$ and $\beta\in \mathbb R$ be arbitrary. We suppose that $\mu$ is such that $\frac{\pi\alpha}{2} < \mu < \min\left\{ \pi,\pi\alpha \right\}$. Then there exists a constant $C=C(\alpha,\beta,\mu)>0$ such that
\begin{align}\label{eq3.0v5}
\left|E_{\alpha,\beta}(z) \right| \le \frac{C}{1+|z|},\quad \mu\le |\arg z| \le \pi.
\end{align}
\end{lemma}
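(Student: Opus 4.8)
The plan is to prove the bound through the classical Hankel-type contour integral representation of the Mittag-Leffler function, combined with an elementary geometric estimate on the distance from $z$ to the integration contour. Starting from Hankel's integral representation of the reciprocal Gamma function,
\[
\frac{1}{\Gamma(s)} = \frac{1}{2\pi\mathrm i}\int_{Ha} \mathrm e^{\zeta}\zeta^{-s}\,\mathrm d\zeta ,
\]
where $Ha$ is a Hankel contour encircling the branch cut along the negative real axis, I would substitute $s=\alpha k+\beta$, insert this into the defining series, and interchange summation and integration (justified by uniform convergence on the contour). After the change of variable that straightens $\zeta^{1/\alpha}$, this yields a representation of the form
\[
E_{\alpha,\beta}(z) = \frac{1}{2\pi\alpha\mathrm i}\int_{\gamma(\epsilon;\delta)} \frac{\mathrm e^{\zeta^{1/\alpha}}\,\zeta^{(1-\beta)/\alpha}}{\zeta - z}\,\mathrm d\zeta ,
\]
valid whenever $z$ lies strictly outside the contour $\gamma(\epsilon;\delta)$. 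Here $\gamma(\epsilon;\delta)$ denotes the curve consisting of the two rays $\{r\mathrm e^{\pm\mathrm i\delta}:r\ge\epsilon\}$ together with the circular arc $\{\epsilon\mathrm e^{\mathrm i\theta}:|\theta|\le\delta\}$, oriented counterclockwise, and $\zeta^{1/\alpha}$ is taken with the principal branch.

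The crucial choice is the opening angle $\delta$. Since $\mu$ satisfies $\frac{\pi\alpha}{2}<\mu<\min\{\pi,\pi\alpha\}$, I would fix $\delta$ with $\frac{\pi\alpha}{2}<\delta<\mu$. Two consequences follow. First, on the rays $\zeta=r\mathrm e^{\pm\mathrm i\delta}$ one has $\mathrm{Re}\,\zeta^{1/\alpha}=r^{1/\alpha}\cos(\delta/\alpha)$, and $\delta/\alpha>\pi/2$ forces $\cos(\delta/\alpha)<0$, so the integrand decays like $\mathrm e^{-c_1 r^{1/\alpha}}$ for some $c_1>0$; consequently
\[
\int_{\gamma(\epsilon;\delta)} \left|\mathrm e^{\zeta^{1/\alpha}}\right|\,|\zeta|^{(1-\beta)/\alpha}\,|\mathrm d\zeta| =: M(\alpha,\beta,\delta) < \infty .
\]
Second, every $z$ with $\mu\le|\arg z|\le\pi$ satisfies $|\arg z|\ge\mu>\delta$, hence lies outside the contour, so the representation applies with no residue contribution.

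It then remains to bound the denominator from below. For $\zeta\in\gamma(\epsilon;\delta)$ one has $|\arg\zeta|\le\delta$, so the angular separation between $z$ and $\zeta$ is at least $\mu-\delta=:\psi_0\in(0,\pi)$. An elementary law-of-cosines argument gives $|\zeta-z|\ge c_0\,|z|$ with $c_0=\sin(\min\{\psi_0,\pi/2\})>0$, uniformly over the contour. Feeding this into the representation yields
\[
\left|E_{\alpha,\beta}(z)\right| \le \frac{1}{2\pi\alpha\,c_0\,|z|}\,M(\alpha,\beta,\delta) = \frac{C_1}{|z|},
\]
which already gives the claimed decay for $|z|$ large. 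For $|z|$ bounded I would invoke the fact that $E_{\alpha,\beta}$ is entire, hence bounded on the compact set $\{|z|\le R,\ \mu\le|\arg z|\le\pi\}$; combining the two regimes produces the uniform estimate $|E_{\alpha,\beta}(z)|\le C/(1+|z|)$ on the whole sector.

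I expect the genuine work to be concentrated in the first step: rigorously deriving the contour representation, i.e.\ justifying the term-by-term integration via Hankel's formula, tracking the branches of the multivalued factors $\zeta^{1/\alpha}$ and $\zeta^{(1-\beta)/\alpha}$ across the cut, and verifying by Cauchy's theorem that the value is independent of the auxiliary radius $\epsilon$ and of the opening angle $\delta$ within the admissible range. By contrast, once the representation and the geometric lower bound on $|\zeta-z|$ are in place, the estimate itself is a short computation.
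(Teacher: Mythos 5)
Your proposal is correct, and it reconstructs essentially the standard argument behind this classical estimate: the paper itself states the lemma without proof, by citation to Podlubny \cite[Theorem 1.4 on p.33]{P99}, and Podlubny's proof is exactly your route --- the Hankel-type contour representation $E_{\alpha,\beta}(z)=\frac{1}{2\pi\alpha\mathrm i}\int_{\gamma(\epsilon;\delta)}\frac{\mathrm e^{\zeta^{1/\alpha}}\zeta^{(1-\beta)/\alpha}}{\zeta-z}\,\mathrm d\zeta$ for $z$ outside the contour, the choice $\frac{\pi\alpha}{2}<\delta<\mu$ forcing $\cos(\delta/\alpha)<0$ and hence absolute convergence, and the uniform lower bound $|\zeta-z|\ge c_0|z|$ from the angular separation $\mu-\delta$. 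The only remark worth adding is that the hypothesis $\frac{\pi\alpha}{2}<\mu<\min\{\pi,\pi\alpha\}$ implicitly restricts to $0<\alpha<2$ (as in Podlubny's statement, despite the lemma's ``$\alpha>0$''), which is precisely what makes your choice of $\delta$ with $\frac{\pi}{2}<\frac{\delta}{\alpha}<\pi$ possible.
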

\begin{lemma}\label{lem-ml'}
Let $\alpha>0$, $\lambda\in\mathbb R$ be arbitrarily fixed. We have
$$
\frac{d}{dt} \left(tE_{\alpha,2} \left(-\lambda t^\alpha \right) \right) = E_{\alpha,1} \left(-\lambda t^\alpha \right),
$$
$$
\frac{d}{dt} \left(E_{\alpha,1} \left(-\lambda t^\alpha \right) \right) = -\lambda t^{\alpha-1} E_{\alpha,\alpha} \left(-\lambda t^\alpha \right),
$$
and
$$
\frac{d}{dt}J^{2-\alpha}\frac{d}{dt} \left(tE_{\alpha,2} \left(-\lambda t^\alpha \right) - t \right) = -\lambda tE_{\alpha,2} \left(-\lambda t^\alpha \right),
$$
$$
\frac{d}{dt}J^{2-\alpha}\frac{d}{dt} \left(E_{\alpha,1} \left(-\lambda t^\alpha \right)\right) = -\lambda E_{\alpha,1} \left(-\lambda t^\alpha \right).
$$
\end{lemma}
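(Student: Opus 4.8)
The plan is to reduce all four identities to termwise manipulations of the defining power series $E_{\alpha,\beta}(-\lambda t^\alpha) = \sum_{k=0}^\infty \frac{(-\lambda)^k t^{\alpha k}}{\Gamma(\alpha k + \beta)}$, the only analytic input being two elementary formulas for powers of $t$. First I would record the Riemann--Liouville scaling formula $J^\gamma t^\beta = \frac{\Gamma(\beta+1)}{\Gamma(\beta+\gamma+1)}\, t^{\beta+\gamma}$ (for $\gamma>0$, $\beta>-1$), which follows from the substitution $\tau = ts$ in \eqref{defi-RL} together with the Beta integral $\int_0^1 (1-s)^{\gamma-1} s^\beta\, ds = \frac{\Gamma(\beta+1)\Gamma(\gamma)}{\Gamma(\beta+\gamma+1)}$. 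Combining this with $\frac{d}{dt} t^\beta = \beta t^{\beta-1}$ and the recursion $z\Gamma(z) = \Gamma(z+1)$ yields the master identity $\partial_t^\alpha t^\beta = \frac{d}{dt} J^{2-\alpha} \frac{d}{dt} t^\beta = \frac{\Gamma(\beta+1)}{\Gamma(\beta-\alpha+1)}\, t^{\beta-\alpha}$, valid for $\beta>1$ (where the boundary terms at $t=0$ cause no trouble). Every monomial appearing below carries exponent exceeding $1$, so this formula applies without further comment.

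For the first two identities I would differentiate the series termwise. Writing $tE_{\alpha,2}(-\lambda t^\alpha) = \sum_{k\ge0} \frac{(-\lambda)^k t^{\alpha k+1}}{\Gamma(\alpha k+2)}$ and differentiating gives $\sum_{k\ge0} \frac{(-\lambda)^k (\alpha k+1) t^{\alpha k}}{\Gamma(\alpha k+2)}$; the cancellation $(\alpha k+1)/\Gamma(\alpha k+2) = 1/\Gamma(\alpha k+1)$ collapses this to $E_{\alpha,1}(-\lambda t^\alpha)$, which is the first identity. For the second, differentiating $E_{\alpha,1}(-\lambda t^\alpha) = \sum_{k\ge0} \frac{(-\lambda)^k t^{\alpha k}}{\Gamma(\alpha k+1)}$ annihilates the constant $k=0$ term; using $\alpha k/\Gamma(\alpha k+1) = 1/\Gamma(\alpha k)$ and shifting the summation index then produces exactly $-\lambda t^{\alpha-1} E_{\alpha,\alpha}(-\lambda t^\alpha)$.

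The last two identities assert that $tE_{\alpha,2}(-\lambda t^\alpha)$ and $E_{\alpha,1}(-\lambda t^\alpha)$ are, up to subtraction of a low-order term, eigenfunctions of the Caputo operator $\partial_t^\alpha = \frac{d}{dt} J^{2-\alpha} \frac{d}{dt}$. Here I would apply the master monomial formula term by term. In the fourth identity the $k=0$ summand of $E_{\alpha,1}$ is the constant $1$, whose Caputo derivative vanishes, so the sum effectively starts at $k=1$; applying $\partial_t^\alpha t^{\alpha k} = \frac{\Gamma(\alpha k+1)}{\Gamma(\alpha(k-1)+1)} t^{\alpha(k-1)}$ and shifting the index gives $-\lambda E_{\alpha,1}(-\lambda t^\alpha)$. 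The third identity is identical in structure: subtracting $t$ removes precisely the $k=0$ monomial $t/\Gamma(2)=t$ of $tE_{\alpha,2}$, leaving a series all of whose exponents exceed $2$, and applying $\partial_t^\alpha t^{\alpha k+1} = \frac{\Gamma(\alpha k+2)}{\Gamma(\alpha(k-1)+2)} t^{\alpha(k-1)+1}$ to the remaining $k\ge1$ terms and re-indexing yields $-\lambda t E_{\alpha,2}(-\lambda t^\alpha)$.

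The only genuine subtlety, and the step I would treat most carefully, is the justification for interchanging the operators $\frac{d}{dt}$ and $J^{2-\alpha}$ with the infinite sums. Since $E_{\alpha,\beta}$ is entire, the series and their formally differentiated counterparts converge uniformly on compact $t$-intervals, so termwise differentiation is legitimate; for the fractional integral $J^{2-\alpha}$ the kernel $(t-\tau)^{1-\alpha}$ is integrable on $(0,t)$ and the partial sums converge uniformly on $[0,T]$, so the interchange is permitted by dominated convergence. With these justifications the four identities follow directly from the monomial computations, and I do not anticipate any structural obstacle beyond this routine convergence bookkeeping.
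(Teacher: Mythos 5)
Your proposal is correct and is exactly the ``direct calculation'' the paper alludes to when it omits the proof: termwise differentiation of the defining power series combined with the Riemann--Liouville scaling formula $J^\gamma t^\beta = \frac{\Gamma(\beta+1)}{\Gamma(\beta+\gamma+1)}t^{\beta+\gamma}$, with the routine uniform-convergence justification for interchanging the operators with the sums. All four identities check out, including the slightly delicate $k=1$ terms (exponents $\alpha$ and $\alpha+1$, both exceeding $1$ since $\alpha\in(1,2)$), so nothing further is needed.
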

This lemma can be done by a direct calculation, so we omit the proof.

\subsection{The case of $B=0$ and $c=0$}\label{ssec-symmetric}
In this subsection, we assume $B=0$ and $c=0$, and we consider the wellposedness of the following initial boundary value problems
\begin{equation}\label{eq-A0}
\begin{cases}
\partial_t^\alpha u(t) + A_0(x) u(t) = F &\text{ in } \Omega\times(0,T),\\
u(x,t) = a(x) ,\quad \partial_t u(x,t)=b(x) & \text{ in } \Omega\times\{0\},\\
u(x,t) = 0, &\text{ on } \partial \Omega \times (0, T),
\end{cases}
\end{equation}
where the operator $A_0$ is defined by \eqref{def-A0}. By Theorem 2.3 in \cite{SY11}, we can give an integral equation which is equivalent to \eqref{eq-A0}:
$$
\begin{aligned}
u(t)=&\sum_{n=1}^\infty  \langle a,\varphi_n \rangle E_{\alpha,1}(-\lambda_n t^\alpha)\varphi_n + t \sum_{n=1}^\infty \langle b,\varphi_n \rangle E_{\alpha,2}(-\lambda_n t^\alpha)\varphi_n
\\
&+\sum_{n=1}^\infty \left[ \int_0^t (t-\tau)^{\alpha-1} E_{\alpha,\alpha}(-\lambda_n(t-\tau)^\alpha) \langle F(\tau),\varphi_n \rangle_{L^2(\Omega)} d\tau \right] \varphi_n.
\end{aligned}
$$
Now we define the solution operators $S_j(z):L^2(\Omega)\rightarrow L^2(\Omega)$  for $z\in\{z\in\C\setminus\{0\};\, |\arg z|<\frac{\pi}{2}\}$, $j=1,2$ by
\begin{equation}\label{def-S(t)}
\begin{aligned}
S_1(z)a:=&\sum_{n=1}^\infty \langle a,\varphi_n \rangle_{L^2(\Omega)} E_{\alpha,1}(-\lambda_nz^{\alpha}) \varphi_n,
\quad a\in L^2(\Omega),\\
S_2(z)a:=&t\sum_{n=1}^\infty \langle b,\varphi_n \rangle_{L^2(\Omega)} tE_{\alpha,2}(-\lambda_nz^{\alpha}) \varphi_n,
\quad b\in L^2(\Omega).
\end{aligned}
\end{equation}
Moreover, from the properties of the Mittag-Leffler functions in Lemma \ref{lem-ml}, by differentiating the above $S_1(z)$ term-wisely, we get
$$
S_1'(z)a =-\sum_{n=1}^\infty \lambda_n \langle a,\varphi_n \rangle_{L^2(\Omega)} z^{\alpha-1}E_{\alpha,\alpha}(-\lambda_nz^{\alpha}) \varphi_n,
\quad a\in L^2(\Omega).
$$
We have the following estimates for the solution operators $S_1(z)$ and $S_2(z)$.
\begin{lemma}
\label{lem-S}
Let $\gamma\in [0,1]$ and suppose $z\in \mathbb C\setminus\{0 \}$ satisfying $|\arg z|<\frac\pi2$, we have the following estimates for the solution operators $S_1(z)$ and $S_2(z)$:
\begin{equation}\label{esti-S}
\begin{aligned}
\|A_0^\gamma S_1(z) \|_{\LL\to\LL} &\le C|z|^{-\al\gamma},\\
\|A_0^{\gamma}S_2(z)\|_{\LL\to\LL} &\le C|z|^{1-\alpha\gamma},\\
\|A_0^{-\gamma}S_1'(z)\|_{\LL\to\LL} &\le C|z|^{\alpha\gamma-1},
\end{aligned}
\end{equation}
where the constant $C>0$ depends only on the coefficients $a_{ij}$, $d$, $\Omega$, $\alpha$ and $\gamma$.
\end{lemma}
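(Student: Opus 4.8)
The plan is to reduce the three operator-norm bounds to scalar estimates on Mittag-Leffler functions via Parseval's identity, and then feed in the decay bound of Lemma \ref{lem-ml}. First I would exploit that $\{\varphi_n\}_{n=1}^\infty$ is an orthonormal basis of $L^2(\Om)$ and that $A_0^\gamma$ acts diagonally in this basis, so that $A_0^\gamma S_1(z)$, $A_0^\gamma S_2(z)$ and $A_0^{-\gamma}S_1'(z)$ are all Fourier multipliers. For instance,
$$
\|A_0^\gamma S_1(z)a\|_{L^2(\Om)}^2 = \sum_{n=1}^\infty \lambda_n^{2\gamma}\left|E_{\alpha,1}(-\lambda_n z^\alpha)\right|^2 \left|\langle a,\varphi_n\rangle\right|^2,
$$
whence the operator norms collapse to the suprema
$$
\|A_0^\gamma S_1(z)\|_{\LL\to\LL} = \sup_{n\ge 1}\lambda_n^\gamma\left|E_{\alpha,1}(-\lambda_n z^\alpha)\right|,
$$
and analogously $\sup_{n}\lambda_n^\gamma|z|\,|E_{\alpha,2}(-\lambda_n z^\alpha)|$ for $S_2$ and $\sup_n \lambda_n^{1-\gamma}|z|^{\alpha-1}|E_{\alpha,\alpha}(-\lambda_n z^\alpha)|$ for $S_1'$.

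The crux is to apply Lemma \ref{lem-ml} to $w_n:=-\lambda_n z^\alpha$, which requires checking that $w_n$ lies in the sector $\mu\le|\arg w_n|\le\pi$ uniformly in $n$. Since $\lambda_n>0$ and $|\arg z|<\tfrac\pi2$, I would compute $\arg w_n=\pi\pm\alpha\arg z$ modulo $2\pi$ and reduce to the principal branch, which gives $|\arg w_n| = \pi - \alpha|\arg z|$, independent of $n$. I would then select $\mu\in(\tfrac{\pi\alpha}{2},\pi)$ so that $\mu\le \pi-\alpha|\arg z|\le\pi$ holds, placing every $w_n$ in the decay sector and yielding $|E_{\alpha,\beta}(-\lambda_n z^\alpha)|\le C(1+\lambda_n|z|^\alpha)^{-1}$ with $C$ uniform in $n$. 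I expect this geometric verification — tracking $\arg(-\lambda_n z^\alpha)$ and pinning down the admissible range of $\arg z$ for which the sector condition holds uniformly — to be the \emph{main obstacle}, since it is precisely where the order $\alpha\in(1,2)$ (rather than $\alpha\in(0,1)$) brings the positive real axis into play and forces a delicate compatibility between $\mu$ and $\arg z$.

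Once the uniform decay $|E_{\alpha,\beta}(-\lambda_n z^\alpha)|\le C(1+\lambda_n|z|^\alpha)^{-1}$ is in hand, all three estimates follow from the same elementary inequality: for $s\in[0,1]$ and $x\ge 0$ one has $x^s\le 1+x$, i.e. $\frac{x^s}{1+x}\le C$. Writing $x=\lambda_n|z|^\alpha$ and $\lambda_n^\gamma=(\lambda_n|z|^\alpha)^\gamma |z|^{-\alpha\gamma}$ gives
$$
\lambda_n^\gamma\left|E_{\alpha,1}(-\lambda_n z^\alpha)\right|\le C\,\frac{(\lambda_n|z|^\alpha)^\gamma}{1+\lambda_n|z|^\alpha}\,|z|^{-\alpha\gamma}\le C|z|^{-\alpha\gamma},
$$
using $\gamma\in[0,1]$, which is the first bound. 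The $S_2$ bound follows identically after factoring out the extra $|z|$, producing $|z|^{1-\alpha\gamma}$. For $S_1'$ I would instead split $\lambda_n^{1-\gamma}=(\lambda_n|z|^\alpha)^{1-\gamma}|z|^{-\alpha(1-\gamma)}$, so that together with the prefactor $|z|^{\alpha-1}$ and $1-\gamma\in[0,1]$ the same inequality gives $\lambda_n^{1-\gamma}|z|^{\alpha-1}|E_{\alpha,\alpha}(-\lambda_n z^\alpha)|\le C|z|^{\alpha-1-\alpha(1-\gamma)}=C|z|^{\alpha\gamma-1}$. Taking the supremum over $n$ in each case yields the claimed operator bounds, with $C$ depending only on $a_{ij}$, $d$, $\Om$, $\alpha$, $\gamma$ through the constant of Lemma \ref{lem-ml} and the choice of $\mu$.
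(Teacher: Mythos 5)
Your overall route is the same as the paper's: diagonalize in the eigenbasis $\{\varphi_n\}$, invoke the decay bound \eqref{eq3.0v5} for $E_{\alpha,\beta}(-\lambda_n z^\alpha)$, and close with the elementary inequality $x^s/(1+x)\le C$ for $s\in[0,1]$. Your three final computations coincide with the paper's (the paper keeps the full sums rather than passing to the suprema of the multipliers, which is cosmetic). The genuine problem sits exactly in the step you flagged as the main obstacle, and your proposed resolution does not work on the whole stated sector. You correctly find $|\arg(-\lambda_n z^\alpha)|=\pi-\alpha|\arg z|$, but Lemma \ref{lem-ml} requires a $\mu$ with $\frac{\pi\alpha}{2}<\mu\le \pi-\alpha|\arg z|$, and such a $\mu$ exists only when $|\arg z|<\frac{(2-\alpha)\pi}{2\alpha}=\frac{\pi}{\alpha}-\frac{\pi}{2}$. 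For $\alpha\in(1,2)$ this threshold is strictly smaller than $\frac{\pi}{2}$ (it equals $\frac{\pi}{6}$ for $\alpha=\frac32$ and tends to $0$ as $\alpha\to 2$). Hence for $\frac{(2-\alpha)\pi}{2\alpha}<|\arg z|<\frac{\pi}{2}$ the points $-\lambda_n z^\alpha$ lie in the growth sector $|\arg w|<\frac{\pi\alpha}{2}$, where $E_{\alpha,\beta}(w)$ contains the term $\frac1\alpha w^{(1-\beta)/\alpha}\e^{w^{1/\alpha}}$ and grows exponentially in $\lambda_n^{1/\alpha}$; the multipliers are then unbounded in $n$ and $A_0^\gamma S_1(z)$ is not a bounded operator. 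No choice of $\mu$ or of branch repairs this, so the sentence ``I would then select $\mu$ so that $\mu\le\pi-\alpha|\arg z|$'' is the gap.

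To be fair, the paper's own proof applies \eqref{eq3.0v5} with no verification of the sector hypothesis at all, so it carries the same gap silently; your write-up at least makes the obstruction visible, and your diagnosis that $\alpha\in(1,2)$ is what creates the difficulty is exactly right. The stated bounds are correct, by your argument, for real $z=t>0$ (where $\arg(-\lambda_n t^\alpha)=\pi$ and any admissible $\mu$ works) and more generally for $|\arg z|<\frac{(2-\alpha)\pi}{2\alpha}$. A complete proof must either shrink the sector in the statement to this range (which also affects the analytic-continuation argument of Section \ref{sec-analy}, where the aperture of $S_\te$ is never pinned down) or supply a genuinely different estimate for $E_{\alpha,\beta}$ valid up to $|\arg z|=\frac{\pi}{2}$, which the cited Lemma \ref{lem-ml} does not provide.
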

\begin{proof}
By the definition of the operator $S_1(t)$, and \eqref{eq3.0v5}, for any $a\in L^2(\Omega)$, we have
\begin{align*}
 \left\|A_0^\gamma S_1(z)a \right\|_\LL^2
=& \sum_{n=1}^\infty  \left|\langle a,\varphi_n \rangle \right|^2 \lambda_n^{2\gamma}   \left|E_{\alpha,1} \left(-\lambda_nz^{\alpha} \right) \right|^2
\\
\le &C \sum_{n=1}^\infty |\langle a,\varphi_n \rangle|^2 \frac{\lambda_n^{2\gamma}  |z|^{2\alpha\gamma}}{(1+\lambda_n |z|^\alpha)^2}   |z|^{-2\alpha\gamma}
\le C|z|^{-2\alpha\gamma} \|a\|_\LL^2,
\end{align*}
hence
$$
 \|A_0^\gamma S_1(z)\|_{\LL\to \LL} \le C|z|^{-\alpha\gamma},\quad |\arg z| <\frac{\pi}2.
$$
For $\gamma\in [0,1)$, we can estimate $A_0^{-1}S_1'(z)$ as follows.
\begin{align*}
 &\| A_0^{-\gamma}S_1'(z)a\|_\LL^2  \\
 \le&  \sum_{n=1}^\infty |z|^{2\alpha-2} \lambda_n^{2(1-\gamma)}|E_{\alpha,\alpha}(-\lambda_n z^\alpha)|^2 \left|\langle a,\varphi_n\rangle \right|^2
 \le C \sum_{n=1}^\infty \frac{\lambda_n^{2(1-\gamma)}|z|^{2\alpha-2}}{(1+\lambda_n |z|^\alpha)^2} \left|\langle a,\varphi_n\rangle \right|^2 \\
=& C|z|^{2\alpha\gamma-2} \sum_{n=1}^\infty \left[\frac{\lambda_n^{1-\gamma} |z|^{\alpha(1-\gamma)}}{1+\lambda_n |z|^\alpha} \right]^2 \left|\langle a,\varphi_n\rangle \right|^2
\le C|z|^{2\alpha\gamma-2} \|a\|_\LL^2,
\end{align*}
which implies
\begin{align*}
 & \left\| A_0^{-\gamma}S_1'(z) \right\|_{\LL\to\LL}
\le C|z|^{\alpha\gamma-1}.
\end{align*}
Again by \eqref{eq3.0v5}, we can similarly get the the estimate of $S_2(z)$:
\begin{align*}
 \|A_0^\gamma S_2(z)a\|_\LL^2
=& |z|^2  \sum_{n=1}^\infty |\langle b,\varphi_n \rangle|^2 \lambda_n^{2\gamma}  |E_{\alpha,2}(-\lambda_nz^{\alpha})|^2
\\
\le &C|z|^2 \sum_{n=1}^\infty | \langle b,\varphi_n \rangle|^2 \frac{\lambda_n^{2\gamma}  |z|^{2\alpha\gamma}}{(1+\lambda_n |z|^\alpha)^2}   |z|^{-2\alpha\gamma}
\le C|z|^{2-2\alpha\gamma} \|a\|_\LL^2.
\end{align*}
Thus, we obtain the second inequality in \eqref{esti-S}.
\end{proof}
By the notations of the solution operators $S_1(z)$ and $S_2(z)$, we can rephrase the integral representation formula of the solution $u$ to the problem \eqref{eq-gov} as follows.
\begin{equation}
\label{eq-int_u}
\begin{aligned}
u(t) =& S_1(t)a + S_2(t)b - \int^t_0 A_0^{-1}S_1'(t-\tau)F(\tau) d\tau,\quad 0<t<T.
\end{aligned}
\end{equation}
We will show the above representation formula \eqref{eq-int_u} of the solution $u$ is well defined. Indeed, we have the following lemma.
\begin{lemma}\label{lem-fp}
Under the assumption \ref{ass-coef}, the problem \eqref{eq-A0} admits a unique solution $u \in L^2(0,T; D(A_0))$ such that $u-a-tb\in H_\alpha(0,T;L^2(\Omega))$ and
$$
\|u\|_{L^2(0,T; D(A_0))} + \|u-a-tb\|_{H_\alpha(0,T;L^2(\Omega))} \le C\|a\|_{H^2(\Omega)} + C\|b\|_{H^1(\Omega)} + C\|F\|_{L^2(0,T;H^1(\Omega))}.
$$
\end{lemma}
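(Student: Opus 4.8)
The plan is to take the integral representation \eqref{eq-int_u} as the \emph{definition} of the mild solution and to verify directly that it lies in the claimed spaces with the claimed bound; uniqueness then follows from a separate eigenfunction argument. Throughout I would use that $S_1(z)$, $S_2(z)$ and all fractional powers $A_0^\gamma$ are simultaneously diagonalized in the basis $\{\varphi_n\}$, so they commute, and I would exploit the embeddings $a\in D(A_0)$, $b\in H_0^1(\Om)=D(A_0^{1/2})$ and $F(\cdot)\in H_0^1(\Om)=D(A_0^{1/2})$ under Assumption \ref{ass-coef} to shift powers of $A_0$ off the solution operators and onto the data.

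For the spatial regularity $u\in L^2(0,T;D(A_0))$ I would apply $A_0$ to \eqref{eq-int_u} termwise. For the first term I write $A_0S_1(t)a=S_1(t)A_0a$ and bound it by $\|S_1(t)\|_{\LL\to\LL}\|A_0a\|_{\LL}\le C\|a\|_{\HH}$, using Lemma \ref{lem-S} with $\gamma=0$. For the second term I split $A_0S_2(t)b=A_0^{1/2}S_2(t)\,A_0^{1/2}b$ and invoke $\|A_0^{1/2}S_2(t)\|_{\LL\to\LL}\le Ct^{1-\alpha/2}$ from Lemma \ref{lem-S}; since $1-\alpha/2\in(0,\tfrac12)$ the factor is bounded on $(0,T)$, so $\|A_0S_2(t)b\|_{\LL}\le C\|b\|_{H^1(\Om)}$. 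For the Duhamel term, applying $A_0$ cancels $A_0^{-1}$ and I write $S_1'(t-\tau)F(\tau)=\bigl(A_0^{-1/2}S_1'(t-\tau)\bigr)\bigl(A_0^{1/2}F(\tau)\bigr)$; Lemma \ref{lem-S} with $\gamma=\tfrac12$ gives $\|S_1'(t-\tau)F(\tau)\|_{\LL}\le C(t-\tau)^{\alpha/2-1}\|F(\tau)\|_{H^1(\Om)}$. Because $\alpha/2-1\in(-\tfrac12,0)$ is $>-1$, the kernel $t^{\alpha/2-1}$ lies in $L^1(0,T)$, so Young's convolution inequality bounds the $L^2(0,T;\LL)$ norm of this term by $C\|t^{\alpha/2-1}\|_{L^1(0,T)}\|F\|_{L^2(0,T;H^1(\Om))}$. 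The same three estimates with $\gamma=0$ (and $\gamma=1$ in the third bound of Lemma \ref{lem-S}) control $\|u\|_{L^2(0,T;\LL)}$, completing the $L^2(0,T;D(A_0))$ bound.

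For the temporal part I set $w:=u-a-tb$, noting $w(0)=0$ and $w_t(0)=0$. The key identity is the fractional fundamental theorem of calculus $J^\alpha\partial_t^\alpha u=J^2\frac{d^2}{dt^2}u=u-u(0)-t\,u_t(0)=u-a-tb=w$, valid for $\alpha\in(1,2)$. From the equation $\partial_t^\alpha u=F-A_0u=:h$, and the previous step shows $h\in L^2(0,T;\LL)$ with $\|h\|_{L^2(0,T;\LL)}\le C\bigl(\|F\|_{L^2(0,T;\LL)}+\|A_0u\|_{L^2(0,T;\LL)}\bigr)$. Hence $w=J^\alpha h$, and since $J^\alpha:L^2(0,T)\to H_\alpha(0,T)$ is bounded (using $\frac{d}{dt}J^\alpha=J^{\alpha-1}$ so that $w'=J^{\alpha-1}h\in H_{\alpha-1}(0,T)$), I obtain $\|w\|_{H_\alpha(0,T;\LL)}\le C\|h\|_{L^2(0,T;\LL)}$, which combined with the spatial step yields the asserted estimate. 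Uniqueness I would obtain by taking the difference of two solutions, which solves \eqref{eq-A0} with $a=b=F=0$, expanding it as $\sum_n w_n(t)\varphi_n$, and observing that each coefficient satisfies $\partial_t^\alpha w_n+\lambda_n w_n=0$ with $w_n(0)=w_n'(0)=0$, whose only solution is $w_n\equiv0$.

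I expect the main obstacle to be the bookkeeping of the temporal singularities at $t=0$: the whole argument hinges on distributing exactly the right fractional powers of $A_0$ (one full power onto $a$, half a power onto $b$ and onto $F$) so that the residual operator-norm exponents coming from Lemma \ref{lem-S} stay above $-1$, making the Duhamel kernel $L^1$ in time and the remaining terms bounded. A secondary technical point is justifying the termwise application of $A_0$ and the vector-valued fractional fundamental-theorem identity in the $\LL$-valued setting, which I would handle by first working with finite partial sums in the eigenbasis and passing to the limit.
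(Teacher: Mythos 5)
Your proposal is correct and follows essentially the same route as the paper: you apply $A_0$ to the integral representation \eqref{eq-int_u}, distribute the fractional powers exactly as the paper does (a full power onto $a$, half powers onto $b$ and $F$), and close with Lemma \ref{lem-S} and Young's convolution inequality. Your temporal step via $u-a-tb=J^\alpha(F-A_0u)$ is just a repackaging of the paper's term-by-term verification that $\frac{d}{dt}J^{2-\alpha}\frac{d}{dt}$ of each piece of $u-a-tb$ lands in $L^2(0,T;L^2(\Omega))$ (done there through the Mittag-Leffler identities of Lemma \ref{lem-ml'}, which is precisely the rigorous form of the eigenbasis limiting argument you defer to).
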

Here and henceforth, $C>0$ is a constant which is independent of $t$, $T$, $a$ and $u$, but may depend on $\alpha$, $d$, $\Omega$ and the coefficients of $A$.
\begin{proof}
Taking the operator $A_0$ on both sides of \eqref{eq-int_u}, from the estimates in Lemma \ref{lem-S} for the solution operators, it follows that
\begin{align*}
\|A_0 u(t)\|
\le& \|S_1(t) A_0 a\| +  \left\|A_0^{\frac12} S_2(t) A_0^{\frac12}b \right\| +  \int^t_0  \left\|A_0^{\frac12-1}S_1'(t-\tau) A_0^{\frac12}F(\tau) \right\| d\tau
\\
\le& C\|A_0a\| + Ct^{1-\frac\alpha2} \left\|A_0^{\frac12}b \right\| + C\int_0^t (t-\tau)^{\frac\alpha2-1}  \left\|A_0^{\frac12}F(\tau) \right\| d\tau,
\end{align*}
from which we further conclude from the Young inequality for the convolution that
\begin{align*}
\int_0^T \|A_0 u(t)\|^2 dt
\le& C \|a\|_{H^2(\Omega)}^2 + C\int_0^T t^{2-\alpha}dt \|b\|_{H^1(\Omega)}^2
\\
&+ C \left( \int_0^T t^{\frac\alpha2-1} dt\right)^2 \int_0^T\|F(t)\|_{H^1(\Omega)}^2 dt.
\end{align*}
By $1<\alpha<2$, we see that the above integrands are integrable and then we get
\begin{align*}
\int_0^T \|A_0 u(t)\|^2 dt
\le& C\|a\|_{H^2(\Omega)}^2 + CT^{2-\alpha} \|b\|_{H^1(\Omega)}^2 +CT^{\alpha} \int_0^T\|F(t)\|_{H^1(\Omega)}^2 dt,
\end{align*}
hence that
\begin{align*}
\|u(t)\|_{L^2(0,T;D(A_0))}
\le C \|a\|_{H^2(\Omega)} + CT^{1-\frac\alpha2} \|b\|_{H^1(\Omega)}^2 + CT^{\frac\alpha2} \|F\|_{L^2(0,T;H^1(\Omega))}.
\end{align*}
Next, we will evaluate the function $u-a-tb$. We will first consider $S_1(t) a -a$. From the formula in Lemma \ref{lem-ml'}, we have
$$
\frac{\partial}{\partial t} J^{2-\alpha} \frac{\partial}{\partial t} (S_1(t) a-a)
= \sum_{n=1}^\infty \lambda_n \langle a,\varphi_n \rangle E_{\alpha,1}(-\lambda_n t^\alpha) \varphi_n = A_0 S_1(t) a.
$$
We immediately get $\frac{\partial}{\partial t} J^{2-\alpha} \frac{\partial}{\partial t} (S_1(t) a-a) \in L^2(0,T;L^2(\Omega))$ and
$$
\left\| \frac{\partial}{\partial t} J^{2-\alpha} \frac{\partial}{\partial t} (S_1(t) a-a) \right\|_{L^2(0,T;L^2(\Omega))} \le C\|a\|_{H^2(\Omega)},
$$
that is $S_1(t) a-a \in H_\alpha(0,T;L^2(\Omega))$ and
$$
\|S_1(t) a - a \|_{H_\alpha(0,T;L^2(\Omega))} \le C\|a\|_{H^2(\Omega)}.
$$
Similarly, again from Lemma \ref{lem-ml'}, we see that
$$
\frac{\partial}{\partial t} J^{2-\alpha} \frac{\partial}{\partial t} (S_2(t) b-tb) = A_0 S_2(t) b \in L^2(0,T;L^2(\Omega))
$$
and
$$
\frac{\partial}{\partial t} J^{2-\alpha} \frac{\partial}{\partial t} \int_0^t A_0^{-1} S_1'(t-\tau) F(\tau) d\tau
= \int_0^t A_0^{\frac12 -1} S_1'(t-\tau) A_0^{\frac12}F(\tau) d\tau.
$$
Consequently,
$$
\|S_2(t) b - b \|_{H_\alpha(0,T;L^2(\Omega))} \le T^{1-\frac\alpha2}\|b\|_{H^1(\Omega)}.
$$
and
$$
\left\| \int_0^t A_0^{-1} S_1'(t-\tau) F(\tau) d\tau \right\|_{H_\alpha(0,T;L^2(\Omega))} \le CT^{\frac\alpha2}\|F\|_{L^2(0,T;H^1(\Omega))}.
$$
Finally, we see that $u-a-tb \in H_\alpha(0,T;L^2(\Omega))$ and the following inequality holds, 
  $$
  \|u-a-tb\|_{H_\alpha(0,T;L^2(\Omega))} \le C \|a\|_{H^2(\Omega)} + CT^{1-\frac\alpha2} \|b\|_{H^1(\Omega)}^2 + CT^{\frac\alpha2} \|F\|_{L^2(0,T;H^1(\Omega))}.
  $$
\end{proof}

\subsection{The case of $B\neq0$ or $c\neq0$}\label{ssec-nonsym}
We will take the lower order term of $u$, say, $B\cdot\nabla u$ and $cu$, as new source terms and consider the wellposedness of the following initial boundary value problems
\begin{equation}\label{eq-gov'}
\begin{cases}
\partial_t^\alpha u(t) + A_0(x) u(t) = -B(x)\cdot\nabla u(t) - c(x) u(t) +F &\text{ in } \Omega\times(0,T),\\
u(x,t) = a(x) ,\quad \partial_t u(x,t)=b(x) & \text{ in } \Omega\times\{0\},\\
u(x,t) = 0, &\text{ on } \partial \Omega \times (0, T),
\end{cases}
\end{equation}
where the operator $A_0$ is defined by \eqref{def-A0}. By taking the nonsymmetric term and zeroth order term as the new source terms, we can argue similarly as that in the proof in the above subsection to see that the solution formally satisfies the integral equation
\begin{equation}\label{eq-int_u'}
\begin{aligned}
u(t) =& S_1(t)a + S_2(t)b - \int^t_0 A_0^{-1}S_1'(t-\tau)F(\tau) d\tau\\
&+\int^t_0 A_0^{-1}S_1'(t-\tau)(B\cdot\nabla u(\tau)+cu(\tau)) d\tau,\quad 0<t<T.
\end{aligned}
\end{equation}
Based on the integral equation \eqref{eq-int_u'}, we can show
the wellposedness of the solution to \eqref{eq-gov}.
\begin{lemma}
\label{lem-fp'}
Under the assumption \ref{ass-coef}.Then the problem \eqref{eq-gov} admits a unique mild solution $u \in L^2(0,T; D(A_0)) \cap C([0,T];L^2(\Omega))$ such that $u-a-tb \in H_\alpha(0,T; L^2(\Omega))$ and
\begin{equation}\label{esti-fp'}
\|u\|_{L^2(0,T; D(A_0))} + \|u-a-tb\|_{H_\alpha(0,T;L^2(\Omega))} \le C(\|a\|_{H^2(\Omega)}+ \|b\|_{H^1(\Omega)} + \|F\|_{L^2(0,T;H^1(\Omega))}).
\end{equation}
\end{lemma}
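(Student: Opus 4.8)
The plan is to solve the integral equation \eqref{eq-int_u'} by a Neumann series, treating the lower-order contribution $B\cdot\nabla u+cu$ as a perturbative source built on top of the free evolution of Lemma \ref{lem-fp}. Write the free part
$$
u_0(t):=S_1(t)a+S_2(t)b-\int_0^t A_0^{-1}S_1'(t-\tau)F(\tau)\,d\tau ,
$$
which by Lemma \ref{lem-fp} lies in $L^2(0,T;D(A_0))$, satisfies $u_0-a-tb\in H_\alpha(0,T;L^2(\Omega))$, and obeys the bound on the right-hand side of \eqref{esti-fp'}. Introduce the Volterra operator
$$
(Ku)(t):=\int_0^t A_0^{-1}S_1'(t-\tau)\bigl(B\cdot\nabla u(\tau)+c\,u(\tau)\bigr)\,d\tau ,
$$
so that \eqref{eq-int_u'} reads $u=u_0+Ku$, i.e. $(I-K)u=u_0$. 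It then suffices to prove that $K$ is a bounded Volterra operator on $X:=L^2(0,T;D(A_0))$ with spectral radius zero, for then $I-K$ is boundedly invertible and $u=\sum_{k\ge0}K^k u_0$ is the unique mild solution in $X$.

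The heart of the matter, and the step I expect to be the main obstacle, is the mapping estimate for $K$. For $u\in X$ the perturbation $h:=B\cdot\nabla u+cu$ lies in $L^2(0,T;H^1(\Omega))$ with $\|h\|_{H^1}\le C(\|B\|_{W^{1,\infty}}+\|c\|_{W^{1,\infty}})\|u\|_{D(A_0)}$. The subtlety is that $h$ is generally \emph{not} in $H_0^1(\Omega)=D(A_0^{1/2})$: since $u=0$ on $\partial\Omega$ one has $\nabla u=(\partial_\nu u)\nu$ there, whence $B\cdot\nabla u=(B\cdot\nu)\,\partial_\nu u\ne0$ on $\partial\Omega$. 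Thus the $\gamma=\tfrac12$ smoothing used for the genuine source $F$ in Lemma \ref{lem-fp} is unavailable. Instead I would fix $\gamma\in(0,\tfrac14)$, for which $D(A_0^\gamma)=H^{2\gamma}(\Omega)$ carries no boundary condition, so $h\in L^2(0,T;D(A_0^\gamma))$ with $\|h\|_{D(A_0^\gamma)}\le C\|h\|_{H^1}\le C\|u\|_{D(A_0)}$. Using that $A_0$ commutes with $S_1'$ together with the third estimate of Lemma \ref{lem-S},
$$
\|A_0(Ku)(t)\|=\Bigl\|\int_0^t S_1'(t-\tau)h(\tau)\,d\tau\Bigr\|\le\int_0^t\bigl\|A_0^{-\gamma}S_1'(t-\tau)\bigr\|\,\|h(\tau)\|_{D(A_0^\gamma)}\,d\tau\le C\int_0^t(t-\tau)^{\alpha\gamma-1}\|u(\tau)\|_{D(A_0)}\,d\tau .
$$
Since $\alpha\gamma>0$, the kernel $t^{\alpha\gamma-1}$ is integrable on $(0,T)$, and Young's convolution inequality yields $\|Ku\|_X\le CT^{\alpha\gamma}\|u\|_X$.

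With this estimate I would conclude as follows. The same bound applied to the iterates gives the Mittag-Leffler-type control $\|K^k\|_{X\to X}\le (CT^{\alpha\gamma})^k/\Gamma(k\alpha\gamma+1)$ on the iterated weakly-singular kernels, so $\sum_k\|K^k\|_{X\to X}<\infty$ and the spectral radius of $K$ is zero; hence $I-K$ is invertible on $X$ for \emph{every} $T>0$ (equivalently, one proves contraction on a short subinterval via $CT^{\alpha\gamma}<1$ and concatenates, or invokes a generalized Gronwall inequality for the fractional kernel). This produces a unique $u=(I-K)^{-1}u_0\in L^2(0,T;D(A_0))$ and, combined with the bound on $u_0$, the $L^2(0,T;D(A_0))$ part of \eqref{esti-fp'}. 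For the remaining regularity, once $u$ is fixed the term $Ku$ has exactly the convolution form of the source term already treated in Lemma \ref{lem-fp}, with $F$ replaced by $-h$ and $\gamma=\tfrac12$ replaced by $\gamma\in(0,\tfrac14)$; applying $\frac{\partial}{\partial t}J^{2-\alpha}\frac{\partial}{\partial t}$, using Lemma \ref{lem-ml'} and repeating that estimate shows $Ku\in H_\alpha(0,T;L^2(\Omega))$ with $\|Ku\|_{H_\alpha(0,T;L^2(\Omega))}\le CT^{\alpha\gamma}\|u\|_X$. Writing $u-a-tb=(u_0-a-tb)+Ku$ then gives the $H_\alpha$ part of \eqref{esti-fp'}. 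Finally, the continuity $u\in C([0,T];L^2(\Omega))$ follows from $u=u_0+Ku$: the Mittag-Leffler factors in $S_1,S_2$ are continuous in $t$ and the convolution terms have locally integrable, hence $t$-continuous, kernels, so dominated convergence yields $L^2$-continuity. The only genuinely new point is the boundary-trace obstruction forcing $\gamma<\tfrac14$ rather than $\gamma=\tfrac12$; everything else reduces to the kernel estimates of Lemmas \ref{lem-S} and \ref{lem-fp}.
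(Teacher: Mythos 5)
Your argument is correct, and it reaches the conclusion by a genuinely different inversion mechanism than the paper. Both proofs start from the same splitting of \eqref{eq-int_u'} into the free part of Lemma \ref{lem-fp} plus a Volterra perturbation, but the paper inverts $I-\mathcal N$ on the larger space $L^2(0,T;H_0^1(\Omega))$ by showing $\mathcal N$ is \emph{compact} there (it maps into $H^1(0,T;L^2(\Omega))\cap L^2(0,T;D(A_0^\gamma))$, $\gamma\in(\frac12,1)$, which embeds compactly), ruling out the eigenvalue $1$ by Gronwall, invoking the Fredholm alternative, and then bootstrapping the regularity from $H_0^1(\Omega)$ up to $D(A_0)$ in two further passes; you instead work directly on $L^2(0,T;D(A_0))$ and invert $I-K$ by the Neumann series with the iterated weakly-singular-kernel bound, obtaining existence, uniqueness and the $D(A_0)$ estimate in a single stroke. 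What the paper's route buys is that in its Fredholm step it only needs $B\cdot\nabla u+cu\in L^2(\Omega)$ (it puts the negative power $A_0^{\gamma-1}$ entirely on $S_1'$ in \eqref{esti-Nu}), so the boundary-trace issue never arises there; the cost is the compactness machinery and the multi-stage bootstrap. What your route buys is brevity and an explicit constant, at the price of the observation you correctly single out: $B\cdot\nabla u+cu$ has no vanishing trace, so one must take $\gamma<\frac14$ where $D(A_0^\gamma)=H^{2\gamma}(\Omega)$ carries no boundary condition. This is a real point, and in fact the same obstruction is present but unremarked in the paper's final bootstrap display, where $A_0^{\gamma-\frac12}(B\cdot\nabla u+cu)$ implicitly requires $\gamma-\frac12<\frac14$. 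Two small things to make your write-up airtight: state explicitly that you are using the identification $D(A_0^{\gamma})=H^{2\gamma}(\Omega)$ for $2\gamma<\frac12$ (the paper only records the inclusion $D(A_0^\gamma)\subset H^{2\gamma}(\Omega)$, and you need the reverse containment), and for the $H_\alpha$ regularity it is cleaner to argue via the equation, $\partial_t^\alpha(Ku)=-(B\cdot\nabla u+cu)-A_0(Ku)\in L^2(0,T;L^2(\Omega))$, than to re-derive the convolution identity of Lemma \ref{lem-fp} with a different exponent.
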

\begin{proof}
We rephrase the integral equation \eqref{eq-int_u'} of $u$ into the operator form
$$
u(\,\cdot\,,t)=\Psi(\,\cdot\,,t) + \mathcal N u(\cdot,t),
$$
where
\begin{align*}
\Psi(\,\cdot\,,t)&:= S_1(t)a + S_2(t)b - \int^t_0 A_0^{-1}S_1'(t-\tau)F(\tau) d\tau, \\
\mathcal N u(\,\cdot\,,t) &:= \int_0^tS_1(t-\tau) (B\cdot\nb u(\,\cdot\,,\tau) + cu(\tau)) d\tau,\quad t\in (0,T).
\end{align*}
In the sequel, we will prove $\Psi\in H^1(0,T;L^2(\Omega)) \cap L^2(0,T;D(A_0))$ and the operator $\mathcal N:L^2(0,T;H_0^1(\Omega))\to L^2(0,T;H_0^1(\Omega))$ is compact. To this end, recalling the argument used in Section \ref{ssec-symmetric}, we have $\Psi\in L^2(0,T;D(A_0))$ such that
$$
\|\Psi\|_{L^2(0,T;D(A_0))} \le C\|a\|_{D(A_0)} + C\|b\|_{H^1(\Omega)}+ C\|F\|_{L^2(0,T;H^1(\Omega))}.
$$
To verify the assertion for $\Psi$, we only need to show $\frac{\partial}{\partial t}\Psi\in L^2(0,T;L^2(\Omega))$. Indeed, by the formulas in Lemma \ref{lem-ml'}, a direct calculation yields
\begin{align*}
\frac{\partial}{\partial t} \Psi(t)
=& -\sum_{n=1}^\infty \lambda_n \langle a,\varphi_n \rangle t^{\alpha-1} E_{\alpha,\alpha}(-\lambda_n t^\alpha) \varphi_n
+ \sum_{n=1}^\infty \langle b,\varphi_n \rangle E_{\alpha,1}(-\lambda_n t^\alpha) \varphi_n
\\
&+\sum_{n=1}^\infty \int_0^t (t-\tau)^{\alpha-2} E_{\alpha,\alpha-1}(-\lambda_n (t-\tau)^\alpha) \langle F(\tau),\varphi_n\rangle d\tau \varphi_n.
\end{align*}
By \eqref{eq3.0v5} and the Young inequality for convolution, we have
\begin{align*}
\left\|\frac{\partial}{\partial t}\Psi \right\|_{L^2(0,T;L^2(\Omega))}^2
\le& CT^{2\alpha-2} \int_0^T \sum_{n=1}^\infty \lambda_n^2 \langle a,\varphi_n \rangle^2 dt + \int_0^T \sum_{n=1}^\infty \langle b,\varphi_n \rangle^2 dt \\
 &+\sum_{n=1}^\infty \left( \int_0^T t^{\alpha-2} dt\right)^2 \int_0^T |\langle F(t),\vp_n\rangle|^2 dt,
\end{align*}
this indicates
$$
\left\|\frac{\partial}{\partial t}\Psi \right\|_{L^2(0,T;L^2(\Omega))}
\le CT^{\alpha-1} \|a\|_{D(A_0)} + C\|b\|_{L^2(\Omega)} + CT^{\alpha-1}\|F\|_{L^2(0,T;L^2(\Omega))}.
$$
Now we turn to show $\mathcal N:L^2(0,T;H_0^1(\Omega))\to L^2(0,T;H_0^1(\Omega))$ is compact operator.
In fact, by the argument used in evaluating $\frac{\partial}{\partial t} \Psi(t)$, we can show that for $u\in L^2(0,T;H_0^1(\Omega))$,
\begin{align*}
\left\|\frac{\partial}{\partial t}\mathcal N u(t) \right\|_{L^2(0,T;L^2(\Omega))}
\le CT^{\alpha-1}\|u\|_{L^2(0,T;L^2(\Omega))}.
\end{align*}
Moreover, when $\gamma\in [\frac12,1)$, by the estimates in Lemma \ref{lem-S}, we have
\begin{equation}\label{esti-Nu}
\begin{aligned}
 \left\|A_0^{\gamma} \mathcal N u(t) \right\|_{L^2(\Omega)}
\le& \int_0^t  \left\|A_0^{\gamma-1} S_1'(t-\tau) \right\|_{L^2(\Omega) \to L^2(\Omega)}  \left\|B\cdot\nabla u(\tau) + cu(\tau) \right\|_{L^2(\Omega)} d\tau
\\
\le& \int_0^t (t-\tau)^{\alpha(1-\gamma)-1} \|u(\tau)\|_{H^1(\Omega)} d\tau.
\end{aligned}
\end{equation}
By the Young inequality for convolution, we have
\begin{align*}
\|\mathcal N u(t)\|_{L^2(0,T;D(A_0^{\gamma}))}
\le  \int_0^T t^{\alpha(1-\gamma)-1} dt \left(\int_0^T \|u(t)\|_{H^1(\Omega)}^2 dt \right)^{\frac12} 
\le& C\frac{T^{\alpha(1-\gamma)}}{\alpha(1-\gamma)} \|u\|_{L^2(0,T;H^1(\Omega))}.
\end{align*}
Collecting all the above estimates for $\mathcal Nu$, we conclude that $\mathcal N$ maps $L^2(0,T;H^1(\Omega))$ into $H^1(0,T;L^2(\Omega)) \cap L^2(0,T;D(A_0^\gamma))$ with $\gamma\in(\frac12,1)$.
Moreover, since the embedded mapping from $H^1(0,T;L^2(\Omega)) \cap L^2(0,T;D(A_0^\gamma))$ to $L^2(0,T;H^1(\Omega))$ is compact, we immediately obtain the compactness of the operator $\mathcal N:L^2(0,T;H_0^1(\Omega)) \to L^2(0,T;H_0^1(\Omega))$.

Now we turn to verify $1$ is not an eigenvalue of $\mathcal N$, that is, $\mathcal N u=u$ in $L^2(0,T;H_0^1(\Omega))$ implies $u=0$. By letting $\gamma=\frac12$ in the inequality \eqref{esti-Nu}, we see that
$$
\left\|A_0^{\frac12} u(t) \right\|_{L^2(\Omega)} =  \left\|A_0^{\frac12} \mathcal N u(t) \right\|_{L^2(\Omega)} \le C\int_0^t (t-\tau)^{\frac\alpha2-1} \|u(\tau)\|_{H^1(\Omega)} d\tau,
$$
from which we further conclude from the general Gronwall inequality that $u$ must vanish in $\Omega\times(0,T)$. This finishes the proof of the above statement.

Consequently, by the Fredholm alternative, we see that $I-\mathcal N$ is bounded from $L^2(0,T;H^1(\Omega))$ into itself. Therefore
$$
u(t) = (I- \mathcal N)^{-1} \Psi(t) \in L^2 \left(0,T;H_0^1(\Omega) \right).
$$
Now we note that the right hand side of the differential equation in \eqref{eq-gov'} belongs to the space $L^2 \left(0,T;L^2(\Omega) \right)$, that is, $-B\cdot\nabla u -cu+F\in L^2 \left(0,T;L^2(\Omega) \right)$, and we can repeat the above argument to derive that the solution $u$ to the integral equation \eqref{eq-int_u'} is such that $u\in L^2(0,T;D(A_0)^\gamma)$, $\gamma\in (\frac12,1)$. Finally, by noting the estimate
$$
\begin{aligned}
\|A_0 \mathcal N u(t)\|_{L^2(\Omega)}
\le& \int_0^t  \left\|A_0^{\frac32 -\gamma-1} S_1'(t-\tau) \right\|_{L^2(\Omega) \to L^2(\Omega)}  \left\|A_0^{\gamma-\frac12} \left(B\cdot\nabla u(\tau) + cu(\tau) \right) \right\|_{L^2(\Omega)} d\tau\\
\le& \int_0^t (t-\tau)^{\alpha(\gamma-\frac12)-1} \|u(\tau)\|_{D(A_0^\gamma)} d\tau.
\end{aligned}
$$
From the Young inequality for the convolution, it follows that $\mathcal Nu\in L^2(0,T;D(A_0))$. Finally, recalling the result in the above subsection, we can obtain that the solution to the integral equation \eqref{eq-int_u'} is such that $u\in L^2(0,T;D(A_0))$ and $u-a-tb \in H_\alpha(0,T;L^2(\Omega))$. It is not difficult to check the inequality \eqref{esti-fp'} is valid by the the Gronwall inequality. We complete the proof of Lemma \ref{lem-fp'}.
\end{proof}

\section{Analyticity with respect to time}\label{sec-analy}
For the homogeneous equation, that is, $F=0$,  we have the analyticity of the solution and the growth estimate.
\begin{lemma}\label{lem-analy}
Under the assumption \ref{ass-coef}, we further let $F=0$.
The solution $u:(0,T]\rightarrow D(A_0)$ can be analytically extended to the sector $\{ z\in \mathbb C\setminus \{0\};|\arg z|<\frac{\pi}{2} \}$,
and the extention $u:(0,\infty) \rightarrow D(A_0)$ satisfies
$$
\|u(t)\|_{D(A_0)} \le C\min\{t^{-\alpha}, 1\}e^{Ct} \|a\|_{D(A_0)} + \|b\|_{H^1(\Omega)},\quad t>0.
$$
\end{lemma}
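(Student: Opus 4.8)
The plan is to build on the integral representation derived in Section~\ref{sec-fp}. Setting $F=0$ in \eqref{eq-int_u'}, the solution satisfies
$$
u(t)=u_0(t)+\int_0^t A_0^{-1}S_1'(t-\tau)\bigl(B\cdot\nabla u(\tau)+c u(\tau)\bigr)\,d\tau,\qquad u_0(t):=S_1(t)a+S_2(t)b,
$$
so I would isolate the symmetric, source-free part $u_0$ and treat the lower-order terms as a perturbation governed by this Volterra equation. The two assertions are then handled separately: analytic continuation to the sector $\Sigma:=\{z\in\C\setminus\{0\}:|\arg z|<\tfrac\pi2\}$, and the growth bound, which is required only on the positive real axis.

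For the analytic continuation I would first observe that each summand $E_{\alpha,\beta}(-\lambda_n z^\alpha)$ is analytic on $\Sigma$, since $z\mapsto z^\alpha$ is analytic for $|\arg z|<\pi$ and $E_{\alpha,\beta}$ is entire. Using \eqref{eq3.0v5} exactly as in the proof of Lemma~\ref{lem-S}, the series defining $S_1(z)a$ and $S_2(z)b$ converge in $D(A_0)$ locally uniformly on $\Sigma$, so $u_0:\Sigma\to D(A_0)$ is analytic as a locally uniform limit of analytic partial sums. For the full solution I would extend the Volterra operator $\mathcal N$ of Lemma~\ref{lem-fp'} to $\Sigma$ by integrating along the ray $\tau=sz$, $s\in[0,1]$, and construct $u$ as the limit of the Picard iterates $u_{k+1}=u_0+\mathcal N u_k$. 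Each iterate is analytic (the integrand depends analytically on $z$, so one may differentiate under the integral), and the weakly singular Gronwall bound for $\mathcal N$ shows that the Neumann series $\sum_k\mathcal N^k u_0$ converges locally uniformly; hence $u$ is analytic on $\Sigma$.

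For the growth bound I would restrict to $t>0$, where \eqref{eq3.0v5} applies since $\arg(-\lambda_n t^\alpha)=\pi$. Applying $A_0$ to $u_0$ and invoking Lemma~\ref{lem-S}, the boundedness bound $\|A_0 S_1(t)a\|\le C\|a\|_{D(A_0)}$ and the decay bound $\|A_0 S_1(t)a\|\le Ct^{-\alpha}\|a\|_\LL\le Ct^{-\alpha}\|a\|_{D(A_0)}$ (using $\lambda_1>0$) combine to $\|A_0 S_1(t)a\|\le C\min\{t^{-\alpha},1\}\|a\|_{D(A_0)}$, while a parallel estimate of $A_0 S_2(t)b$ is uniformly bounded by $C\|b\|_{H^1(\Omega)}$. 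For the perturbation, applying $A_0$ to the convolution leaves $\int_0^t S_1'(t-\tau)(B\cdot\nabla u+cu)\,d\tau$; since $S_1'(s)$ alone carries a non-integrable $s^{-1}$ singularity, I would borrow a fractional power $A_0^{\gamma-1/2}$ (with $\gamma\in(\tfrac12,\tfrac34)$, so that $B\cdot\nabla u+cu\in D(A_0^{\gamma-1/2})$ with no spurious boundary condition, the exponent staying strictly below the $H^{1/2}$ trace threshold) onto the source, use $\|A_0^{1/2-\gamma}S_1'(s)\|\le Cs^{\alpha(\gamma-1/2)-1}$ from Lemma~\ref{lem-S}, and bound $\|u\|_{D(A_0^\gamma)}\le C\|u\|_{D(A_0)}$ (valid since $\gamma<1$ and $\lambda_1>0$). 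This produces the closed weakly singular inequality
$$
\|A_0 u(t)\|\le C\min\{t^{-\alpha},1\}\|a\|_{D(A_0)}+C\|b\|_{H^1(\Omega)}+C\int_0^t (t-\tau)^{\alpha(\gamma-1/2)-1}\|u(\tau)\|_{D(A_0)}\,d\tau,
$$
whose kernel is integrable; the generalized (Mittag-Leffler type) Gronwall inequality then closes the bound and supplies the factor $e^{Ct}$.

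I expect the main obstacle to be this final step. The non-integrable singularity of $S_1'$ forces the regularity-borrowing described above, and one must verify that the borrowed power stays strictly below the $H^{1/2}$ threshold so that the lower-order terms lie in the relevant domain while the kernel remains integrable. Propagating the \emph{sharp} prefactor $\min\{t^{-\alpha},1\}$ through the generalized Gronwall inequality — so that the near-origin singularity and the $t\to\infty$ decay of the homogeneous part both survive the convolution and the perturbation contributes precisely the exponential factor — is the delicate point; moreover the same weakly singular bound must be strong enough to force locally uniform convergence of the Neumann series on the complex sector, which is what underpins the analyticity claim.
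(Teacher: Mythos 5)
Your proposal follows essentially the same route as the paper: extend the Volterra integral equation for $u$ (with $F=0$) to the sector by integrating along rays $\tau=rz$, run a Picard/Neumann iteration whose increments are controlled by the fractional-power estimate on $S_1'$ from Lemma \ref{lem-S}, sum the resulting series (the paper does the induction explicitly, obtaining $M_0M^n|z|^{\alpha n/2}/\Gamma(\tfrac{\alpha}{2}n+1)$ and hence a Mittag--Leffler, i.e.\ exponential, bound), and deduce analyticity from locally uniform convergence of the series of analytic iterates. The only differences are minor: the paper takes $\gamma=\tfrac12$ in the estimate $\|A_0^{-1/2}S_1'(s)\|\le Cs^{\alpha/2-1}$ and bounds $\|B\cdot\nabla v+cv\|_{L^2(\Omega)}\le C\|v\|_{D(A_0)}$ directly (rather than your $\gamma\in(\tfrac12,\tfrac34)$ with the trace-threshold discussion), and it does not in fact propagate the $\min\{t^{-\alpha},1\}$ prefactor through the iteration, settling for the bound $M_0E_{\alpha/2,1}(Mt^{\alpha/2})\le CM_0e^{M^{2/\alpha}t}$.
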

Here and henceforth, $C>0$ denotes constants which are independent of $t$, $T$, $a$ and $u$, but may depend on $\alpha$, $d$, $\Omega$ and the coefficients of $A$.
\begin{proof}
Since the solution $u$ to the IBVP \eqref{eq-gov} satisfies the integral equation \eqref{eq-int_u'}, after the change of variables, we find
\begin{align}\label{equ-int-u-standard}
u(t)=S_1(t)a + S_2(t)b + t\int_0^1 A_0^{-1}S_1'(rt) \left( B\cdot\nabla u((1-r)t)+cu((1-r)t) \right) \d r.
\end{align}
Moreover, we extend the variable $t$ in \eqref{equ-int-u-standard} from $(0,T)$ to the sector $\{z\in \mathbb C\setminus\{0\}; |\arg z|<\frac{\pi}{2}\}$, and setting $u_0=0$, we define $u_{n+1}(z)$ $(n=0,1,\dots)$ as follows:
\begin{align}
\label{def-u_n}
u_{n+1}(z) =&S_1(z)a + S_2(z)b
 -z\int_0^1 A_0^{-1}S_1'(rz) \left( B\cdot\nabla u_n+cu_n\right) ((1-r)z) \d r.
\end{align}
Following the argument in \cite{LHY20}, we conclude from the definition \eqref{def-S(t)} of the solution operators $S_1(z),S_2(z)$ and the properties of Mittag-Leffler function that $u_n(z)$ defined in \eqref{def-u_n} uniformly converges to the solution to the initial-boundary value problem \eqref{eq-gov} as $n\to\infty$ for any compact subset of the section $ \left\{z\in\mathbb C\setminus\{0\};\, |\arg z|<\frac{\pi}{2} \right\}$. The details are listed as follows.

For any nonnegative integer $n=0,1,\cdots$, taking the operator $A_0$ on both sides of \eqref{def-u_n}, we proceed by induction on $n$ to prove the inequality
\begin{align}
\label{esti-u_n}
\|u_{n+1}(z)-u_n(z)\|_{D(A_0)}
\leq M_0\frac{M^n |z|^{\frac\alpha2 n}}{\Gamma \left(\frac\alpha2 n+1 \right)},
\quad n=0,1,\cdots,
\end{align}
where the constant $M$ is independent of $T$, $t>0$, $z\in S_{\te,T}:= \{z\in S_\te; |z|\le T\}$, but may depend on $d$, $\Om$, $\al$.
Firstly, for $n=0$, using the estimate \eqref{esti-S} in Lemma \ref{lem-S} for $z\in S_{\te,T}$, it follows that
\begin{align*}
\|u_1(z) - u_0(z)\|_{D(A_0)}
\le& \|A_0 S_1(z)a\|_\LL + \|A_0 S_2(z)b\|_\LL  \\
\le& C\|a\|_{H^2(\Omega)} + CT \|b\|_{H^1(\Omega)}=:M_0.
\end{align*}
Next, for any $n\in\N$, in view of the inequality  $\| B\cdot\na v \|_\LL \le C\| v\|_{D(A_0)}$ holds for $v \in D(A_0)$ since $B\in W^{1,\infty}(\Omega)$, we derive
\begin{align*}
 & \left\| A_0^{-1}S'(r z)\big( B\cdot\na (u_n-u_{n-1}) + c(u_n-u_{n-1})\big)\big((1-r)z\big) \right\|_{D(A_0)}\\
\le& C \left\| A_0^{\frac12-1}S'(r z) \right\|_{\LL\to \LL}  \|(u_n-u_{n-1})((1-r)z)\|_{D(A_0)}\\
\le&C|rz|^{\frac\alpha2-1}  \|(u_n-u_{n-1})((1-r)z)\|_{D(A_0)} .
\end{align*}
Combining the above inequalities and the equation \eqref{def-u_n}, we can obtain
\begin{align*}
 \|u_{n+1}(z) - u_n(z)\|_{D(A_0)} 
\le& C|z|^{\frac\alpha2} \int_0^1 (1-r)^{\frac\alpha2-1} \|u_n(rz) - u_{n-1}(rz)\|_{D(A_0)}\d r.
\end{align*}
Consequently, we prove by inductive argument that
\begin{align*}
\|u_{n+1}(z) - u_n(z)\|_{D(A_0)}
\le& CM_0 |z|^{\frac\alpha2} \int_0^1 (1-r)^{\frac\alpha2-1} \frac{M^{n-1} |rz|^{\frac\alpha2 (n-1)}}{\Gamma \left(\frac\alpha2 (n-1) +1 \right)} dr
\\
=&CM_0 \Gamma \left(\frac\alpha2 \right) \frac{|z|^{\frac\alpha2 n}M^{n-1}}{\Gamma \left(\frac\alpha2 n+1 \right)},\quad z\in S_{\te,T},
\end{align*}
where in the last equality we used the relation between Gamma function and Beta function
$B(\alpha,\beta)=\frac{\Gamma(\alpha) \Gamma(\beta)}{\Gamma(\alpha + \beta)}$, $\alpha,\beta>-1$.
Finally, by choosing $M:=C\Gamma(\frac\alpha2)$, we obtain \eqref{esti-u_n}.

For any compact subset $K$ of $S_{\te,T}$, from the asymptotic behavior of the Gamma function, we see the infinite series
$\sum\limits_{n=0}^\infty \frac{ M^n |z|^{\frac\alpha2 n}}{ \Gamma(\frac\alpha2 n + 1) }$
is uniformly convergent in $K$, which implies $u(z) := \sum\limits_{n=0}^\infty \left(  u_{n+1}(z) - u_n(z) \right) $ is analytic in the compact set $K$ since each term $u_n$ is analytic in $S_\te$.  By restricting the variable $z$ to the interval $(0,T)$, we assert that $u(t)$ is the unique solution to the integral equation \eqref{eq-int_u'} with $F=0$.

As a byproduct, the above argument also indicates that the solution $u(t)$ can be analytically extended to $u(t): (0,\infty) \to D(A_0)$. Moreover, the extended $u$ admits the following growth estimate
\begin{align*}
\| A_0 u(t)\|_\LL
\le  \sum_{n=0}^\infty \| A_0 u_{n+1}(t) - A_0 u_n(t)\|_\LL 
\le& M_0\sum_{n=0}^\infty  \frac{M^n t^{\frac\alpha2 n}} {\Gamma(\frac\alpha2 n +1) }
=M_0 E_{\frac\alpha2, 1}(M t^{\frac\alpha2}),
\end{align*}
which implies for any $T>0$,
\begin{equation}
\label{esti-u*}
\|u(t)\|_{D(A_0)} \leq CM_0e^{M^{\frac2\alpha}t}
\le Ce^{M^{\frac2\alpha}T}(\|a\|_{H^2(\Omega)} + T\|b\|_{H^1(\Omega)}),\quad 0<t<T,
\end{equation}
in view of the asymptotic expansion properties of the Mittag-Leffler functions in \cite[Theorem 1.4 on p.33]{P99}. This completes the proof of the lemma.
\end{proof}

\section{Uniqueness of the inverse source problem}\label{sec-isp}
In this section, we will consider the proof of uniqueness of the determination of the source term. For this, we will transform the inverse problem of searching the unknown source term into solving the determination of the initial value of the corresponding parabolic equation through the Laplace transform, and prove the uniqueness.

\subsection{Unique continuation}
In this part, we consider the unique continuation principle of the solution of the homogeneous time-fractional diffusion diffusion-wave equation as follows
\begin{align}\label{eq-ucp}
\begin{cases}
\partial_t^\alpha u +A(x)u = 0 &\text{ in } \Omega \times (0, T), \\
u(x,0) = 0,\ u_t(x,0) = b &\text{ in } \Omega,\\
u(x,t)=0 &\text{ on } \partial\Omega\times(0, T).
\end{cases}
\end{align}
The unique continuation is a characteristic property of the solution of the fractional diffusion-wave equation, which is useful in  dealing with the inverse source problem and the approximate controllability, see Fujishiro and Yamamoto \cite{FY14}, Jiang, Li, Liu and Yamamoto \cite{JLLY17}.
\begin{lemma}[Unique continuation]\label{lem-ucp}
Let $\Gamma\subset\partial\Omega$ be an arbitrarily chosen subboundary and let $u$ be the solution to the problem \eqref{eq-ucp} with $b\in D \left(A_0^{\frac12} \right)$. Then $\partial_{\nu_A} u=0$ on $\Gamma\times(0, T)$ implies $u = 0$ in $\Omega\times(0, T)$.
\end{lemma}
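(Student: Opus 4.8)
The plan is to reduce the parabolic-type unique continuation statement to the classical elliptic unique continuation theorem by exploiting the $t$-analyticity of the solution established in Lemma~\ref{lem-analy}. First I would observe that the solution $u$ to \eqref{eq-ucp} (with $a=0$) can be analytically extended in $t$ to the sector $\{z\in\mathbb C\setminus\{0\};\,|\arg z|<\frac\pi2\}$, taking values in $D(A_0)$; this is exactly the content of Lemma~\ref{lem-analy} applied with $F=0$ and $a=0$, and the hypothesis $b\in D(A_0^{1/2})=H_0^1(\Omega)$ guarantees the required regularity so that $\partial_{\nu_A}u$ makes sense on the boundary. The boundary trace $\partial_{\nu_A}u(x,t)$ is then itself real-analytic in $t$ on $(0,\infty)$ as a function into a suitable trace space. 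Since $\partial_{\nu_A}u=0$ on $\Gamma\times(0,T)$ by assumption, analyticity in $t$ immediately propagates this vanishing to all of $\Gamma\times(0,\infty)$; that is, $\partial_{\nu_A}u(x,t)=0$ for $(x,t)\in\Gamma\times(0,\infty)$.

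Next I would pass to a stationary (elliptic) problem. The natural device is the Laplace transform in $t$: set $\widehat u(x,p)=\int_0^\infty e^{-pt}u(x,t)\,\mathrm dt$ for $\mathrm{Re}\,p$ large, which is well defined because of the growth estimate $\|u(t)\|_{D(A_0)}\le CM_0 e^{M^{2/\alpha}t}$ from \eqref{esti-u*}. Applying the Laplace transform to \eqref{eq-ucp}, using the representation of the Caputo derivative and the initial conditions $u(\cdot,0)=0$, $u_t(\cdot,0)=b$, one obtains an elliptic equation of the form
\begin{align*}
(p^\alpha + A(x))\,\widehat u(x,p) = p^{\alpha-2}b(x) \quad\text{in }\Omega,
\end{align*}
together with $\widehat u=0$ on $\partial\Omega$. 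Since $\partial_{\nu_A}u=0$ on $\Gamma\times(0,\infty)$, the transform yields $\partial_{\nu_A}\widehat u(x,p)=0$ on $\Gamma$ for each fixed $p$ in the region of convergence. Thus for each such $p$, the function $\widehat u(\cdot,p)$ satisfies a second-order elliptic equation in $\Omega$ with both Dirichlet data ($\widehat u=0$) and conormal data ($\partial_{\nu_A}\widehat u=0$) vanishing on the open subboundary $\Gamma$.

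At this point the classical elliptic unique continuation principle (Cauchy data vanishing on an open part of the boundary forces the solution to vanish) applies: $\widehat u(\cdot,p)\equiv 0$ in $\Omega$ for every $p$ in the convergence region. Since $p\mapsto\widehat u(x,p)$ is holomorphic, uniqueness of the Laplace transform then gives $u(x,t)=0$ for all $t>0$, which restricts to $u=0$ in $\Omega\times(0,T)$ as desired. The main obstacle I expect is technical rather than conceptual: one must justify that the conormal boundary trace commutes with the time-analytic continuation and with the Laplace transform, i.e.\ that $\partial_{\nu_A}u$ is genuinely analytic in $t$ as a map into the trace space and that its vanishing transfers cleanly to $\partial_{\nu_A}\widehat u$. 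This requires controlling the $D(A_0)$-valued (hence $H^2(\Omega)$-valued) analyticity uniformly enough that the trace operator $u\mapsto\partial_{\nu_A}u|_{\partial\Omega}$ may be applied term-by-term; the uniform convergence on compact subsets of the sector furnished by \eqref{esti-u_n} in the proof of Lemma~\ref{lem-analy} is precisely what makes this rigorous. A secondary point to verify is that the zeroth- and first-order terms $B\cdot\nabla$ and $c$ do not spoil the elliptic unique continuation, but since the principal part $A_0$ is uniformly elliptic with $C^1$ coefficients and the lower-order coefficients lie in $W^{1,\infty}(\Omega)$, the standard Aronszajn--Cordes type unique continuation theorem covers this case.
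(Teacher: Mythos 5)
Your argument coincides with the paper's up to and including the Laplace transform: the analytic extension of $u$ via Lemma \ref{lem-analy}, the propagation of $\partial_{\nu_A}u=0$ from $\Gamma\times(0,T)$ to $\Gamma\times(0,\infty)$, and the reduction to the resolvent equation $(s^\alpha+A)\widehat u(\cdot\,;s)=s^{\alpha-2}b$ with $\widehat u=0$ on $\partial\Omega$ and $\partial_{\nu_A}\widehat u=0$ on $\Gamma$. The gap is in the very next step. The classical unique continuation principle from Cauchy data applies to solutions of the \emph{homogeneous} equation $Lw=0$ (or to differential inequalities $|Lw|\le C(|w|+|\nabla w|)$), whereas your transformed equation has the right-hand side $s^{\alpha-2}b$, where $b$ is exactly the unknown you are trying to prove is zero; it is not controlled by $\widehat u$ and its gradient, and it need not vanish near $\Gamma$. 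For an inhomogeneous equation the conclusion is simply false: any nonzero $w\in C_c^\infty(\Omega)$ has vanishing Cauchy data on all of $\partial\Omega$ yet solves $(s^\alpha+A)w=f$ with $f:=(s^\alpha+A)w\not\equiv 0$. So the assertion $\widehat u(\cdot\,;p)\equiv 0$ does not follow --- indeed, if it held, the equation would force $b=0$ outright, which is circular.

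The paper closes this step by a different device: it sets $\widehat u_1:=s^{2-\alpha}\widehat u(\cdot\,;s)=(A+s^\alpha)^{-1}b$ and observes that, under the substitution $\eta=s^\alpha$, this is the Laplace transform of the solution $u_2$ of the \emph{parabolic} problem $\partial_t u_2+Au_2=0$ with initial datum $b$ and homogeneous Dirichlet condition. The vanishing of $\partial_{\nu_A}\widehat u_2(\cdot\,;\eta)$ on $\Gamma$ for all large $\eta$ then gives, by uniqueness of the inverse Laplace transform, $\partial_{\nu_A}u_2=0$ on $\Gamma\times(0,\infty)$, and the lateral-Cauchy-data unique continuation theorem for parabolic equations --- which, unlike the elliptic Cauchy problem you invoke, tolerates an unknown initial datum --- yields $u_2\equiv 0$ and hence $b=0$, so $u\equiv 0$. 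To repair your proof you must insert this transfer to a parabolic (or otherwise homogeneous) problem, or exploit the dependence on the parameter $s$ in some other way; unique continuation for a single fixed $s$ cannot work.
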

\begin{proof}
It suffices to show that the initial value vanishes by the uniqueness of the solution to \eqref{eq-gov-infty}. For this, according to our assumptions and Lemma \ref{lem-analy}, we can analytically extend the solution $u:(0,T] \to D(A_0)$ to $u(t): (0,\infty) \to D(A_0)$. We still denote the extension by $u$ if no conflict occurs. Therefore, the original problem \eqref{eq-ucp} becomes
\begin{align}\label{eq-gov-infty}
\begin{cases}
\partial_t^\alpha u +A(x)u = 0 &\text{ in } \Omega \times (0, \infty), \\
u(x,0) = 0,\ u_t(x,0) = b &\text{ in } \Omega,\\
u(x,t)=0 &\text{ on } \partial\Omega\times(0, \infty)
\end{cases}
\end{align}
with the additional condition
\begin{equation}\label{eq-van}
\partial_{\nu_A}u=0\quad\mbox{in}\quad \Gamma\times(0,\infty).
\end{equation}
By the Laplace transforms (denoted by $\widehat\cdot$\,), the above differential equation can be rephrased into the transformed algebraic equation
$$
\begin{cases}
(A+s^\alpha)\widehat u(x;s)=s^{\al-2}b(x), & x\in\Omega,\\
\widehat u(x;s)=0, & x\in\partial\Omega
\end{cases}
$$
with a parameter $s>s_1$, where $s_1>0$ a sufficiently large constant. Now it is not difficult to show that
the function $\widehat u_1(x;s):=s^{2-\al}\widehat u(x;s)$ satisfies 
 the following boundary value problem for an elliptic equation
\begin{equation}\label{equ-lap-v}
\begin{cases}
(A+s^\al)\widehat u_1(x;s)=b(x), & x\in\Omega,\\
\wh u_1(x;s)=0, & x\in\partial\Omega,
\end{cases}\quad s>s_1.
\end{equation}
On the other hand, we consider an initial-boundary value problem for a parabolic equation
$$
\begin{cases}
\pa_tu_2+ A u_2=0 & \mbox{in }\Om\times(0,\infty),\\
u_2=a & \mbox{in }\Om\times\{0\},\\
u_2=0 & \mbox{on }\pa\Omega\times(0,\infty).
\end{cases}
$$
Again, applying the Laplace transform, we can similarly obtain
$$\begin{cases}
(A+\eta)\widehat u_2(x;\eta)=a(x), & x\in\Omega,\\
\widehat u_2(x;\eta)=0, & x\in\partial\Omega,
\end{cases}
$$
where the parameter $\eta>s_2$ and $s_2>0$ is a sufficiently large constant. By the change of the variable $\eta=s^\alpha$, we have
$$
\begin{cases}
(A+s^\alpha)\widehat u_2(x;s^\al)=a(x), & x\in\Omega,\\
\wh u_2(x;s^\alpha)=0, & x\in\partial\Omega,
\end{cases}\quad s^\alpha>s_2.
$$
By the uniqueness result of the boundary value problems of elliptic equations, we have $\widehat u_2$ equals to the solution $\widehat u_1$ to the elliptic problem \eqref{equ-lap-v}, that is,
$$
\widehat u_2(x;s^\al)=\widehat u_1(x;s)=s^{2-\alpha}\widehat u(x;s),\quad(x;s)\in\Omega\times\{s>s_0\},\ s_0:=\max\{s_2^{1/\alpha},s_1\}.
$$
{This combine} with the additional condition $\partial_{\nu_A}\wh u(x;s)=0$ on $\Gamma\times(s_0,\infty)$ implies that
$$
\partial_{\nu_A}\widehat u_2(x;\eta)=0,\quad(x;\eta)\in\Gamma\times\{\eta>s_0^\alpha\}.
$$
Now the uniqueness of the inverse Laplace transform indicates that $\partial_{\nu_A}u_2=0$ in $\Gamma\times(0,\infty)$. We must have $u_2=0$ in $\Omega\times(0,\infty)$ in view of the unique continuation property for parabolic equations, hence that $a=u_2(\,\cdot\,,0)=0$ in $\Omega$, which completes the proof.
\end{proof}

\subsection{Proof of Theorem \ref{thm-unique}}
Before giving the proof of our second result, we will show several useful lemmata. The first one is about the Riemann-Liouville integral.
\begin{lemma}[Convolution formula for the Riemann-Liouville fractional integral, \cite{YL21}]\label{lem-convo-RL}
Let $\alpha>0$ and $g,h\in L^2(0,T)$, then
$$
J^\alpha\left(\int_0^t g(\tau) h(t-\tau) d\tau \right)
=\int_0^t  g(\tau) (J^\alpha h)(t-\tau) d\tau,\quad 0<t<T.
$$
\end{lemma}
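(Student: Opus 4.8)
The plan is to recognize the Riemann--Liouville integral $J^\alpha$ as a Laplace-type convolution against the kernel $k_\alpha(t):=t^{\alpha-1}/\Gamma(\alpha)$, and then to reduce the identity to the associativity of this convolution. Writing $(g*h)(t)=\int_0^t g(\tau)h(t-\tau)\,d\tau$, the definition \eqref{defi-RL} says precisely that $J^\alpha\phi=k_\alpha*\phi$ for any $\phi\in L^2(0,T)$. Hence the left-hand side of the claimed identity is $k_\alpha*(g*h)$ while the right-hand side is $g*(k_\alpha*h)$, and the lemma amounts to $k_\alpha*(g*h)=g*(k_\alpha*h)$, which is an instance of the commutativity and associativity of the convolution.

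Rather than invoking the convolution algebra abstractly, I would carry out the equivalent direct computation, since it makes the change of variables transparent. First I would insert the definition of $J^\alpha$ into the left-hand side to obtain the iterated integral
$$
J^\alpha(g*h)(t)=\frac1{\Gamma(\alpha)}\int_0^t (t-s)^{\alpha-1}\left(\int_0^s g(\tau)h(s-\tau)\,d\tau\right)ds.
$$
Next I would apply Fubini's theorem to exchange the order of integration over the triangle $\{0\le\tau\le s\le t\}$, which yields
$$
J^\alpha(g*h)(t)=\frac1{\Gamma(\alpha)}\int_0^t g(\tau)\left(\int_\tau^t (t-s)^{\alpha-1}h(s-\tau)\,ds\right)d\tau.
$$
Finally, substituting $\sigma=s-\tau$ in the inner integral turns it into $\Gamma(\alpha)(J^\alpha h)(t-\tau)$, so that after cancelling $\Gamma(\alpha)$ the right-hand side of the lemma appears.

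The only point requiring care is the justification of Fubini's theorem, i.e.\ the absolute convergence of the double integral. Since $(0,T)$ is bounded, $g,h\in L^2(0,T)\subset L^1(0,T)$, and the weakly singular kernel $s\mapsto s^{\alpha-1}$ is integrable on $(0,T)$ because $\alpha>0$; thus Young's inequality for convolutions guarantees that each of $g*h$, $k_\alpha*(g*h)$ and $g*(k_\alpha*h)$ lies in $L^1(0,T)$ and that the integrand $(t-s)^{\alpha-1}|g(\tau)|\,|h(s-\tau)|$ is integrable over the relevant region. This legitimizes the interchange of integrals and the substitution, completing the argument. I do not expect any genuine obstacle here: the lemma is essentially a restatement of the associativity of convolution, and the computation is routine once integrability is in place.
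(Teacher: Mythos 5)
Your proof is correct. The paper itself does not prove this lemma --- it simply cites \cite{YL21} --- so there is no in-paper argument to compare against; your route (recognizing $J^\alpha$ as convolution with the kernel $t^{\alpha-1}/\Gamma(\alpha)$ and reducing the identity to associativity of the Laplace convolution, verified concretely by Fubini over the triangle $\{0\le\tau\le s\le t\}$ plus the substitution $\sigma=s-\tau$) is the standard and essentially unique proof. Your integrability check is also adequate: since $g,h\in L^2(0,T)\subset L^1(0,T)$ and $s\mapsto s^{\alpha-1}$ is integrable near $0$ for $\alpha>0$, Tonelli applied to $(t-s)^{\alpha-1}|g(\tau)|\,|h(s-\tau)|$ shows the double integral is finite for a.e.\ $t$ (indeed for every $t$, as $g*h$ is continuous by Cauchy--Schwarz), which licenses the interchange. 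The only cosmetic caveat is that the resulting identity holds a priori for a.e.\ $t\in(0,T)$ as an equality of $L^1$ (in fact $L^2$) functions, which is all the paper uses.
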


\begin{lemma}[Duhamel principle]\label{lem-duha}
Letting $a=b=0$ and $F=g(t)f(x)$ in the problem \eqref{eq-gov} such that $g\in L^2[0,T]$ and $f\in H_0^1(\Omega)$.
Then the solution $u$ to the problem satisfies the following representation
\begin{equation}\label{eq-v}
u(t) = \int_0^t \rho(t-\tau) v(\tau) d\tau,\quad t\in (0,T),
\end{equation}
where $J^{2-\alpha} \rho(t) = g(t)$ and $v$ is the solution to the following problem
\begin{align}\label{eq-duha}
\begin{cases}
\partial_t^\alpha v +A(x)v = 0 &\text{ in } \Omega \times (0, T), \\
v(x,0) = 0,\ v_t(x,0) = f &\text{ in } \Omega,\\
v(x,t)=0 &\text{ on } \partial\Omega\times(0, T).
\end{cases}
\end{align}
\end{lemma}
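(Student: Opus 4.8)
The plan is to show that the convolution
$w(t):=\int_0^t \rho(t-\tau)v(\tau)\,\mathrm{d}\tau$
is itself a solution of the forward problem \eqref{eq-gov} with $a=b=0$ and $F=gf$, and then to invoke the uniqueness asserted in Lemma \ref{lem-fp'} to identify $w$ with $u$. Since the kernel $\rho$ (determined by $J^{2-\alpha}\rho=g$, i.e. $\rho=\frac{d}{dt}J^{\alpha-1}g$) is typically singular at $t=0$, I would avoid differentiating $w$ pointwise and instead work with the equivalent Volterra integral equations, where every operation stays within the spaces furnished by Section \ref{sec-fp}.

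First I would record the integral reformulations. Applying $J^\alpha$ to each Caputo equation and using the inversion identity $J^\alpha\partial_t^\alpha\phi=\phi-\phi(0)-t\phi'(0)$ (which follows from the semigroup law $J^\alpha J^{2-\alpha}=J^2$), the homogeneous problem \eqref{eq-duha} for $v$ becomes $v=tf-J^\alpha(Av)$, equivalently $J^\alpha(Av)=tf-v$, while the target problem (zero Cauchy data, source $gf$) becomes $w=fJ^\alpha g-AJ^\alpha w$. One checks directly that this last integral equation encodes both the vanishing Cauchy data $w(0)=w_t(0)=0$ and the equation $\partial_t^\alpha w+Aw=gf$, so it suffices to verify that $w=\rho*v$ satisfies it.

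Next I would substitute $w=\rho*v$ into $AJ^\alpha w$. Using Lemma \ref{lem-convo-RL} to move $J^\alpha$ across the time-convolution, and commuting the purely spatial operator $A$ with the convolution in $t$ (legitimate because $v\in L^2(0,T;D(A_0))$ by Lemma \ref{lem-fp'}), I obtain $AJ^\alpha w=\rho*J^\alpha(Av)=\rho*(tf-v)=f(\rho*m)-w$, where $m(\tau)=\tau$. Since $(\rho*m)(t)=\int_0^t\rho(t-\tau)\tau\,\mathrm{d}\tau=\int_0^t(t-\sigma)\rho(\sigma)\,\mathrm{d}\sigma=J^2\rho(t)$, the target equation reduces to the scalar identity $J^\alpha g=J^2\rho$, which is immediate from $g=J^{2-\alpha}\rho$ and $J^\alpha J^{2-\alpha}=J^2$. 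Hence $w$ solves the integral equation characterizing the solution of \eqref{eq-gov}, and uniqueness (Lemma \ref{lem-fp'}) yields $u=w=\rho*v$; the boundary condition $w|_{\partial\Omega}=0$ is inherited from $v$.

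The main obstacle is the rigorous justification of the two commutations in the presence of the singular kernel $\rho$: interchanging $A$ with the time-convolution and applying Lemma \ref{lem-convo-RL} when $\rho$ need not lie in $L^2(0,T)$. I would handle this by first establishing the identity for smooth data (for instance $g\in C^1[0,T]$ as in Theorem \ref{thm-unique}, in which case $\rho\in L^1(0,T)$ and all convolutions converge absolutely), controlling everything through the regularity estimate \eqref{esti-fp'}, and then extending to $g\in L^2[0,T]$ by density in the $J^{2-\alpha}$-topology. The remaining ingredients, namely $J^\alpha J^{2-\alpha}=J^2$ and the convolution identity $\rho*m=J^2\rho$, are routine.
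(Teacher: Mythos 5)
Your proposal is correct and follows the same overall strategy as the paper: set $w=\rho*v$, show it solves the forward problem with zero Cauchy data and source $gf$, and identify $w$ with $u$ by the uniqueness in Lemma \ref{lem-fp'}, with Lemma \ref{lem-convo-RL} doing the key commutation. The difference is in how the verification is executed. The paper applies the operator-form Caputo derivative $\frac{\partial}{\partial t}J^{2-\alpha}\frac{\partial}{\partial t}$ directly to $w$, which forces the splitting $w=\bigl(\rho*(\tau f)\bigr)+\rho*\bigl(v-\tau f\bigr)$ so that each piece lies in a space where the derivative makes sense (using $v(\cdot,0)=0$ to kill the boundary term), and then reads off $\partial_t^\alpha w=gf+\rho*\partial_t^\alpha v=gf-Aw$. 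You instead apply $J^\alpha$ and check the Volterra integral equation $w=fJ^\alpha g-J^\alpha(Aw)$, reducing everything to $\rho*(tf-v)=fJ^2\rho-w$ and the scalar identity $J^\alpha g=J^2\rho$. Your route buys you two things: it never differentiates through the singular kernel $\rho$ (so no splitting is needed), and it works directly at the level of the mild-solution/integral-equation formulation that Lemma \ref{lem-fp'} actually provides. You are also more careful than the paper about the sense in which $\rho$ exists for $g\in L^2[0,T]$ (the paper tacitly assumes $\rho$ is a function; your smooth-data-plus-density remark addresses this). Both arguments are sound and rest on the same two ingredients, the convolution formula and the semigroup law for $J$.
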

\begin{proof}
We denote the right hand side of \eqref{eq-v} as $\widetilde u$, and we only need to show that $\widetilde u$ satisfies the problem \eqref{eq-gov} in view of the uniqueness result in Lemma \ref{lem-fp'} in Section \ref{sec-fp}. For this, we make the following calculation
\begin{align*}
\partial_t^\alpha \tilde u(t)
=& \frac{\partial}{\partial t} J^{2-\alpha} \frac{\partial}{\partial t} \left(\int_0^t \rho(t-\tau) \tau d\tau f + \int_0^t \rho(\tau) (v(t-\tau) - (t-\tau) f) d\tau \right)
\\
=&\frac{\partial}{\partial t} J^{2-\alpha} \left(\int_0^t \rho(t-\tau) d\tau f \right)+
\frac{\partial}{\partial t} J^{2-\alpha} \left(\int_0^t \rho(\tau) \left(\frac{\partial}{\partial t}(v(t-\tau) - (t-\tau)f) \right) d\tau \right).
\end{align*}
Here in the last equality we used the fact that $v(\cdot,0)=0$ in $\Omega$. Moreover, by Lemma \ref{lem-convo-RL}, and noting $J^{2-\alpha} \rho = g$,
we see that $\partial_t^\alpha \tilde u(t)$ can be rephrased as
\begin{align*}
\partial_t^\alpha \tilde u(t)=& g(t) f(x) +  \int_0^t \rho(\tau) \frac{\partial}{\partial t} J^{2-\alpha}\frac{\partial}{\partial t}\left(v(t-\tau) - (t-\tau)f \right) d\tau\\
=&g(t) f(x) + \int_0^t \rho(t-\tau)\partial_t^\alpha v(\tau) d\tau,\quad 0<t<T.
\end{align*}
Finally, by \eqref{eq-duha}, we see that
\begin{align*}
\partial_t^\alpha \tilde u(t)=g(t) f(x) - A(x) \tilde u(t),\quad 0<t<T.
\end{align*}
This completes the proof of the lemma.
\end{proof}
On the basis of the unique continuation of the solution established in the above subsection, we are ready to give the proof of the uniqueness of our inverse source problem.
\begin{proof}[Proof of Theorem \ref{thm-unique}]
By $\partial_{\nu_A} u(x,t)= 0$ on $\Gamma\times(0,T) $, and Lemma \ref{lem-duha}, we have
\begin{align*}
\int_0^t \rho( \tau) \partial_{\nu_A} v(t - \tau) \,d\tau = 0, \quad x \in \Gamma,\ t \in (0, T),
\end{align*}
where $v$ is the solution to the problem \eqref{eq-duha}. Moreover, multiplying the Riemann-Liouville integral $J^{2-\alpha}$ on both sides of the above equation,
and then taking $t$-derivative, we have
$$
g(0) \partial_{\nu_A} v(t) + \int_0^t g'(t - \tau) \partial_{\nu_A} v(\tau) \,d\tau = 0,\quad x\in\Gamma,\ t\in(0,T).
$$
Now since $g(0) \ne 0$, we conclude from Gronwall's inequality that
\begin{align*}
\partial_{\nu_A} v(x,t) = 0, \quad x \in \Gamma,\ t \in (0,T),
\end{align*}
from which we see the solution $v$ to the problem \eqref{eq-duha} must vanish in $\Omega\times(0,T)$ by the unique continuation for the fractional diffusion-wave equation in Lemma \ref{lem-ucp}, hence $f \equiv 0$. We finish the proof of the theorem.
\end{proof}

\section{Stability of the inverse source problem} \label{sec-stabi}
In this section, following the arguments in \cite{LY06, LL20}, we establish a Lipschitz stability for determining the source.

\subsection{An integral identity}
For proving the second main result, we will give an integral identity with aid of an adjoint problem, which reflects a corresponding relation of varied of unknown source functions with changes of the boundary values and additional observations.
To this end, we will first show a useful relation between Riemman-Liouville integral and the backward Riemman-Liouville integral.

\begin{lemma}\label{lem-J_T}
Suppose $f,g \in L^2 (0, T)$, then
\begin{align*}
\int_0^T (J^\alpha f)(t) g(t) \,\mathrm{d}t = \int_0^T f(t) J_T^\alpha g(t) \,\mathrm{d}t.
\end{align*}
\end{lemma}
The proof can be done by direct calculation in view of the Fubini lemma or one can refer to \cite{FY14}, so we omit the proof of this lemma.
Based on the above lemma, we can derive the following integral identity.
\begin{lemma}\label{le4.2v3}
Consider $u$ the solution of \eqref{eq-gov} with the source term $F(x,t)=f(x)g(t)$ and initial values $a=b=0$. Then we have
\begin{align}\label{eq4.2v3}
\int_0^T \int_\Gamma ( \partial_{\nu_A} u) \varphi \,\mathrm{d}x \mathrm{d}t = \int_0^T  \left\langle f(\cdot) g(t), v[\varphi] \right\rangle_{L^2(\Omega)} \,\mathrm{d}t,
\end{align}
where $\varphi\in C^\infty(\partial\Omega\times(0,T))$ with supp $\varphi\subseteq \Gamma$ and $v[\varphi]$ is the solution to the problem \eqref{eq4.1v3}.
\end{lemma}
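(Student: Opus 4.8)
The plan is to test the governing equation \eqref{eq-gov} (with $a=b=0$ and $F=fg$) against the adjoint solution $v=v[\varphi]$ and then integrate by parts in both space and time so that every derivative is thrown onto $v$. Concretely, I would start from the identity $\int_0^T\int_\Omega (\partial_t^\alpha u + Au)\,v\,\mathrm dx\,\mathrm dt = \int_0^T\int_\Omega f(x)g(t)\,v\,\mathrm dx\,\mathrm dt$, whose right-hand side is already the right-hand side of \eqref{eq4.2v3}. The whole point is then to show that, after integration by parts, the left-hand side collapses to the single boundary term $\int_0^T\int_\Gamma(\partial_{\nu_A}u)\varphi\,\mathrm dx\,\mathrm dt$, the interior contributions cancelling precisely because $v$ solves \eqref{eq4.1v3}.

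For the spatial operator I would use Green's second identity for $A=A_0+B\cdot\nabla+c$. Since $a_{ij}=a_{ji}$, the principal part $A_0$ is formally self-adjoint, and moving all spatial derivatives onto $v$ produces the formal adjoint $A^\ast v=A_0 v-\nabla\cdot(Bv)+cv$ --- exactly the spatial operator appearing in \eqref{eq4.1v3} --- together with two boundary integrals over $\partial\Omega$. One of them carries the factor $u|_{\partial\Omega}$ and hence vanishes by the homogeneous Dirichlet condition in \eqref{eq-gov}; the other is $\int_{\partial\Omega}(\partial_{\nu_A}u)\,v\,\mathrm dS$. Because $v=\varphi$ on $\partial\Omega$ with $\mathrm{supp}\,\varphi\subseteq\Gamma$, this reduces to an integral over $\Gamma$ tested against $\varphi$, which is precisely the measurement term.

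The temporal part is the delicate step and the main obstacle. Here I would write $\partial_t^\alpha u=\frac{d}{dt}J^{2-\alpha}\frac{du}{dt}$ and invoke the fractional integration-by-parts identity of Lemma \ref{lem-J_T}, which converts the forward Riemann--Liouville integral $J^{2-\alpha}$ into the backward one $J_T^{2-\alpha}$, so as to transfer the fractional derivative onto $v$ and recover the backward Caputo derivative $\partial_T^\alpha v=-\frac{d}{dt}J_T^{2-\alpha}\frac{dv}{dt}$. Each ordinary integration by parts in $t$ generates boundary contributions at $t=0$ and $t=T$: the initial conditions $u(\cdot,0)=u_t(\cdot,0)=0$ annihilate the $t=0$ terms, while the terminal conditions $J_T^{1-\alpha}v(\cdot,T)=J_T^{1-\alpha}v_t(\cdot,T)=0$ imposed in \eqref{eq4.1v3} are designed exactly to kill the $t=T$ terms. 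Verifying that every such boundary-in-time term really does vanish --- keeping careful track of the weakly singular kernels $(t-\tau)^{1-\alpha}$ and of the regularity $u\in H_\alpha(0,T;L^2(\Omega))$ furnished by Lemma \ref{lem-fp'} --- is the ingredient that requires genuine care.

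Finally I would assemble the two computations. After the spatial and temporal integrations by parts, the interior double integral takes the form $\int_0^T\int_\Omega u\,(\partial_T^\alpha v - A^\ast v)\,\mathrm dx\,\mathrm dt$, with the signs fixed by the definitions entering \eqref{eq4.1v3}, and this vanishes identically because $v=v[\varphi]$ solves the adjoint system \eqref{eq4.1v3}. What survives is exactly the boundary observation integral over $\Gamma$, so the left-hand side of the tested equation equals $\int_0^T\int_\Gamma(\partial_{\nu_A}u)\varphi\,\mathrm dx\,\mathrm dt$, and comparison with the untouched right-hand side yields \eqref{eq4.2v3}. In short, the adjoint problem \eqref{eq4.1v3} is reverse-engineered so that the interior terms cancel and only the data term remains; the one genuinely nontrivial point is the fractional integration by parts in time and the vanishing of its temporal boundary terms.
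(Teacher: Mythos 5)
Your proposal is correct and follows essentially the same route as the paper: test the equation against $v[\varphi]$, integrate by parts in space (the Dirichlet condition on $u$ and $v|_{\partial\Omega}=\varphi$ supported in $\Gamma$ producing the flux term) and use Lemma \ref{lem-J_T} to transfer the fractional time derivative onto $v$, so that the interior terms cancel by the adjoint system \eqref{eq4.1v3}. You are in fact more careful than the paper about the temporal boundary terms at $t=0$ and $t=T$, which the paper's proof passes over silently.
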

\begin{proof}
This can be done by using Lemma \ref{lem-J_T}. In fact, we multiply $v[\varphi]$ on both sides of the equation in \eqref{eq-gov}, and integrate on $\Omega \times (0,T)$, and we conclude from Lemma \ref{lem-J_T} and integration by parts that
\begin{align*}
 \int_0^T \langle fg , v[\varphi] \rangle \,\mathrm{d}t
 =& \ \int_0^T \langle \partial_t^\alpha u, v[\varphi]\rangle \,\mathrm{d}t + \int_0^T \langle (A_0+c) u, v[\varphi] \rangle \,\mathrm{d}t + \int_0^T \langle B\cdot\nabla u, v[\varphi] \rangle \,\mathrm{d}t \\
=& - \int_0^T \langle u, \partial_T^\alpha v[\varphi] - (A_0+c) v[\varphi] + \nabla \cdot( B v[\varphi]) \rangle \,\mathrm{d}t  + \int_0^T \int_\Gamma \partial_{\nu_A} u \cdot \varphi \,\mathrm{d}x \mathrm{d}t.
\end{align*}
Together with \eqref{eq4.1v3}, we can further get
\begin{align*}
\int_0^T \langle fg , v[\varphi] \rangle \,\mathrm{d}t
= \int_0^T \int_\Gamma ( \partial_{\nu_A} u) \varphi \,\mathrm{d}x \mathrm{d}t.
\end{align*}
\end{proof}

\subsection{Construction of stability} \label{se4.2v3}
On the basis of the integral identity and noting the uniqueness of the inverse source problem established in the previous section, we can show that the functional $\| \cdot \|_{\mathcal B}: L^2(\Omega) \to \mathbb{R}^+$ defined \eqref{eq2.3v3} is a new norm of $L^2(\Omega)$.

\begin{lemma}\label{le4.3v3}
$\| \cdot \|_{\mathcal B}$ defined by \eqref{eq2.3v3} is a norm on $L^2(\Omega)$.
\end{lemma}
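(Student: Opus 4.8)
The plan is to verify the three defining properties of a norm for the functional $\|\cdot\|_{\mathcal B}$ given in \eqref{eq2.3v3}: nonnegativity together with the definiteness (i.e.\ $\|f\|_{\mathcal B}=0 \iff f=0$), absolute homogeneity, and the triangle inequality. Nonnegativity is immediate since $\|f\|_{\mathcal B}$ is defined as a supremum of absolute values. Homogeneity and subadditivity are routine consequences of the bilinearity of $\mathcal B_g(f,\varphi)$ in its first argument $f$: for a scalar $c$ one has $\mathcal B_g(cf,\varphi)=c\,\mathcal B_g(f,\varphi)$, so $\|cf\|_{\mathcal B}=|c|\,\|f\|_{\mathcal B}$; and since $\mathcal B_g(f_1+f_2,\varphi)=\mathcal B_g(f_1,\varphi)+\mathcal B_g(f_2,\varphi)$, taking absolute values and then the supremum over $\|\varphi\|_{L^2(\Omega)}=1$ yields $\|f_1+f_2\|_{\mathcal B}\le \|f_1\|_{\mathcal B}+\|f_2\|_{\mathcal B}$. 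These two steps I would dispatch quickly.

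The substantive content, and the step I expect to be the main obstacle, is the definiteness: showing that $\|f\|_{\mathcal B}=0$ forces $f=0$ in $L^2(\Omega)$. The direction $f=0 \Rightarrow \|f\|_{\mathcal B}=0$ is trivial from the definition \eqref{def-B}. For the converse, suppose $\|f\|_{\mathcal B}=0$; then by \eqref{eq2.3v3} we have $\mathcal B_g(f,\varphi)=\int_0^T \langle f g, v[\varphi]\rangle_{L^2(\Omega)}\,\mathrm dt = 0$ for \emph{every} admissible test function $\varphi\in C^\infty(\partial\Omega\times[0,T])$ with $\supp\varphi\subseteq\Gamma$. The strategy is to invoke the integral identity of Lemma \ref{le4.2v3}: letting $u$ be the solution of \eqref{eq-gov} with source $F(x,t)=f(x)g(t)$ and zero initial data $a=b=0$, the identity \eqref{eq4.2v3} gives
\begin{align*}
\int_0^T\int_\Gamma (\partial_{\nu_A} u)\,\varphi\,\mathrm dx\,\mathrm dt
= \int_0^T \langle f(\cdot)g(t), v[\varphi]\rangle_{L^2(\Omega)}\,\mathrm dt
= \mathcal B_g(f,\varphi) = 0
\end{align*}
for all such $\varphi$. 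Since the test functions $\varphi$ range over a dense family in $L^2(\Gamma\times(0,T))$ (supported in $\Gamma$), this vanishing of the pairing forces $\partial_{\nu_A} u = 0$ on $\Gamma\times(0,T)$.

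Once $\partial_{\nu_A} u=0$ on $\Gamma\times(0,T)$ is established, the conclusion $f=0$ follows directly from the uniqueness result already proved, namely Theorem \ref{thm-unique}: the boundary data $\partial_{\nu_A} u$ vanishing on $\Gamma\times(0,T)$ implies $f\equiv 0$ in $\Omega$. This is precisely why the definiteness of the functional encodes, and in fact is equivalent to, the uniqueness of the inverse source problem. The delicate point to handle carefully is the density argument converting $\mathcal B_g(f,\varphi)=0$ for all smooth $\varphi$ supported in $\Gamma$ into the pointwise (a.e.) vanishing of $\partial_{\nu_A} u$ on $\Gamma\times(0,T)$; here I would note that $\partial_{\nu_A} u\in L^2(\Gamma\times(0,T))$ by the forward regularity (Lemma \ref{lem-fp'} and the trace theorem), and that the smooth functions compactly supported in $\Gamma\times(0,T)$ are dense in $L^2(\Gamma\times(0,T))$, so a function in $L^2$ pairing to zero against all of them must be zero. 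Assembling these pieces—bilinearity for homogeneity and the triangle inequality, then the integral identity plus the uniqueness theorem for definiteness—completes the proof that $\|\cdot\|_{\mathcal B}$ is a genuine norm on $L^2(\Omega)$.
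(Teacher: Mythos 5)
Your proposal is correct and follows essentially the same route as the paper: bilinearity gives homogeneity and the triangle inequality, and definiteness is obtained by combining the integral identity of Lemma \ref{le4.2v3} (to convert $\mathcal B_g(f,\varphi)=0$ for all admissible $\varphi$ into $\partial_{\nu_A}u=0$ on $\Gamma\times(0,T)$) with the uniqueness Theorem \ref{thm-unique}. Your explicit density argument for passing from the vanishing pairing to $\partial_{\nu_A}u=0$ is a welcome clarification of a step the paper leaves implicit.
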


\begin{proof}
Step 1. $ \| c  f  \|_{\mathcal B} = |c| \|  f  \|_{\mathcal B}$, $\forall\, c \in \mathbb{R}$,

From the definition of the function $\|\cdot\|_{\mathcal B}$, we have
\begin{align*}
\| c  f  \|_{\mathcal B} = \sup\limits_{ \| \varphi \|_{L^2(\Omega)} = 1}  | \mathcal{B}_{ g } ( c  f , \varphi) |.
\end{align*}
From the bi-linearity of $\mathcal{B}_{ g }$, it follows that
\begin{align*}
\mathcal{B}_{ g } (c  f , \varphi) = c \mathcal{B}_{ g } (  f , \varphi),
\end{align*}
which immediately implies
\begin{align*}
\| c  f  \|_{\mathcal B}= \sup\limits_{ \| \varphi \|_{L^2(\Omega)} = 1} c  \left|\mathcal{B}_{ g } (  f , \varphi) \right|
= c \| f  \|_{\mathcal B}.
\end{align*}

Step 2.
$\|  f  +  h  \|_{\mathcal B} \le \| f  \| _{\mathcal B}+ \| h  \|_{\mathcal B}$.

By the triangle inequality, a direct calculation yields
\begin{align*}
\left| \mathcal{B}_{ g } (  f  +  h  , \varphi) \right|
\le  \left|\mathcal{B}_{ g } (  f  , \varphi) \right| + \left| \mathcal{B}_{ g } (  h  , \varphi) \right|,
\end{align*}
then
\begin{align*}
\sup\limits_{ \| \varphi \|_{L^2(\Omega) } = 1} |\mathcal{B}_{ g } (  h  +  f , \varphi) |
\lesssim \sup\limits_{ \| \varphi \|_{L^2(\Omega)} = 1}   |\mathcal{B}_{ g } (  f , \varphi)|
+ \sup\limits_{\| \varphi \|_{L^2(\Omega)= 1}}  |\mathcal{B}_{ g } (  h , \varphi)|,
\end{align*}
that is $\|  f  +  h  \|_{\mathcal B} \le \|  f  \|_{\mathcal B} + \|  h  \|_{\mathcal B}$.

Step 3. $\|  f  \|_{\mathcal B}  = 0 \Rightarrow   f  = 0 \text{ on } \Omega.$

We note that $\|  f  \|_{\mathcal B} = 0$ implies
\begin{align*}
| \mathcal{B}_{ g } (  f , \varphi) | = 0,\quad \forall\, \varphi\in C^\infty(\partial\Omega\times(0,T)) \text{ with supp} \varphi \subset \Gamma.
\end{align*}
Then from the definition of $\mathcal{B}_{ g } $, we have
\begin{align*}
\int_0^T \langle fg, v[ \varphi] \rangle_{L^2(\Omega)}  \,\mathrm{d}t = 0, \quad\forall\, \varphi\in C^\infty(\partial\Omega\times(0,T))  \text{ with supp} \varphi \subset \Gamma.
\end{align*}
By \eqref{eq4.2v3}, it follows that
\begin{align*}
\int_0^T \int_{\Gamma} ( \partial_{\nu_A} u ) \varphi \,\mathrm{d}x \mathrm{d}t = 0,\quad\forall\, \varphi\in C^\infty(\partial\Omega\times(0,T))  \text{ with supp} \varphi \subset \Gamma.
\end{align*}
Thus $\partial_\nu u = 0$ on $\Gamma \times (0, T)$. Now from the uniqueness result for the inverse source problem established above,
 it follows that $\partial_{\nu_A} u =0$ on $\Gamma \times (0,T)$, and therefore $ f  = 0$.

 Collecting the above results, we see $\| \cdot \| $ is a norm on $L^2(\Omega) $.
\end{proof}
We are now in a position to give the proof of our second main result.
\begin{proof}[Proof of Theorem \ref{thm-stabi}]
From the integral identity \eqref{eq4.2v3}, it follows that
\begin{align*}
\mathcal{B}_{ g } (  f_1  -  f_2 , \varphi)
= \int_0^T \langle f_1g-  f_2 g, v[ \varphi] \rangle_{L^2(\Omega)} \,\mathrm{d}t 
=& \int_0^T \int_\Gamma ( \partial_{\nu_A} u_1 - \partial_{\nu_A} u_2 ) \varphi \,\mathrm{d}x \mathrm{d}t.
\end{align*}
By the Holder inequality, we have
\begin{align*}
|\mathcal{B}_{ g } (  f_1  -  f_2  , \varphi) |
 \le \left| \int_0^T \int_\Gamma ( \partial_{\nu_A} u_1 - \partial_{\nu_A} u_2 ) \varphi \,\mathrm{d}x \mathrm{d}t \right|
& \le  \left[ \int_0^T \int_\Gamma |\partial_{\nu_A} u_1 - \partial_{\nu_A} u_2 |^2 \,\mathrm{d}x \mathrm{d}t \right]^{\frac12} \|\varphi\|_{L^2(\Gamma\times(0,T))},
\end{align*}
which implies
\begin{align*}
\sup_{\|\varphi\|=1}| \mathcal{B}_{ g } (  f_1  - f_2  , \varphi) |
 \le  \| \partial_{\nu_A} u_1 - \partial_{\nu_A} u_2 \|_{L^2(0, T; L^2(\Gamma))}^2 .
\end{align*}
We finish the proof of the theorem by noting the definition of the norm $\|\cdot\|_{\mathcal B}$.
\end{proof}

\section*{Acknowledgments.}
The first author has been supported by ``the Fundamental Research Funds for the Central Universities" (No.B210202147). The second author thanks to the NSF grant of China (No. 11801326). Zhiyuan Li is grateful to Professor Ting Wei for explaining her work on the fractional diffusion-wave equation and for pointing out some typos and misunderstanding in an initial version of this manuscript. Zhiyuan Li also thanks Professor Gongsheng Li for calling his attention to \cite{LY06}.

\end{document}